\newcommand{\bbC}{{\mathbb{C}}}
\newcommand{\bbD}{{\mathbb{D}}}
\newcommand{\bbR}{{\mathbb{R}}}
\newcommand{\bbZ}{{\mathbb{Z}}}
\newcommand{\fre}{{\frak{e}}}
\newcommand{\bk}{{\mathbf{k}}}
\newcommand{\calE}{{\mathcal{E}}}
\newcommand{\calS}{{\mathcal S}}
\newcommand{\calT}{{\mathcal T}}
\newcommand{\lb}{\label}
\newcommand{\f}{\frac}
\newcommand{\ol}{\overline}
\newcommand{\ti}{\tilde  }
\newcommand{\wti}{\widetilde  }
\newcommand{\dist}{\text{\rm{dist}}}
\newcommand{\ess}{\text{\rm{ess}}}
\newcommand{\ac}{\text{\rm{ac}}}
\newcommand{\s}{\text{\rm{s}}}
\newcommand{\intt}{\text{\rm{int}}}
\newcommand{\bi}{\bibitem}
\newcommand{\beq}{\begin{equation}}
\newcommand{\eeq}{\end{equation}}
\newcommand{\ba}{\begin{align}}
\newcommand{\ea}{\end{align}}
\newcommand{\veps}{\varepsilon}
\newcounter{smalllist}
\newenvironment{SL}{\begin{list}{{\rm\roman{smalllist})}}{%
\setlength{\topsep}{0mm}\setlength{\parsep}{0mm}\setlength{\itemsep}{0mm}%
\setlength{\labelwidth}{2em}\setlength{\leftmargin}{2em}\usecounter{smalllist}%
}}{\end{list}}
\newcommand{\comm}[1]{}
\DeclareMathOperator{\Real}{Re}
\DeclareMathOperator{\Ima}{Im}
\numberwithin{equation}{section}
\newtheorem{theorem}{Theorem}[section]
\newtheorem*{p2.1}{Proposition 2.1}
\newtheorem{corollary}[theorem]{Corollary}
\theoremstyle{definition}
\newtheorem*{remark}{Remark}
\newtheorem*{remarks}{Remarks}
\newcommand{\abs}[1]{\lvert#1\rvert}
\newcommand{\jap}[1]{\langle #1 \rangle}
\begin{document}

\title{Finite Gap Jacobi Matrices: A Review}
\author{Jacob S.\ Christiansen}
\address{Department of Mathematical Sciences, University of Copenhagen,
Universitetsparken 5, DK-2100 Copenhagen, Denmark}
\email{stordal@math.ku.dk}
\thanks{The first author was supported in part by a Steno Research Grant (09-064947) from
the Danish Research Council for Nature and Universe}

\author{Barry Simon}
\address{Mathematics 253-37, California Institute of Technology, Pasadena, CA 91125, USA}
\email{bsimon@caltech.edu}
\thanks{The second author was supported in part by NSF grant DMS-0968856}

\author{Maxim Zinchenko}
\address{Department of Mathematics and Statistics, University of New Mexico, Albuquerque, NM 87131, USA}
\email{maxim@math.unm.edu}
\thanks{The third author was supported in part by NSF grant DMS-0965411}

\date{August 4, 2012} 

\keywords{Isospectral torus, Orthogonal polynomials, Szeg\H{o}'s theorem, Szeg\H{o} asymptotics, Lieb--Thirring bounds}
\subjclass[2010]{47B36, 42C05, 58J53, 34L15}

\maketitle

\section{Introduction} \lb{s1}

Perhaps the most common theme in Fritz Gesztesy's broad opus is the study of problems with
periodic or almost periodic finite gap differential and difference equations, especially
those connected to integrable systems. The present paper reviews recent progress in the understanding
of finite gap Jacobi matrices and their perturbations. We'd like to acknowledge our debt to Fritz
as a collaborator and friend. We hope Fritz enjoys this birthday bouquet!

We consider Jacobi matrices, $J$, on $\ell^2 (\{1,2,\dots,\})$ indexed by $\{a_n, b_n\}_{n=1}^\infty$,
$a_n >0$, $b_n\in\bbR$, where ($u_0\equiv 0$)
\begin{equation} \lb{1.1}
(Ju)_n = a_n u_{n+1} + b_n u_n +a_{n-1} u_{n-1}
\end{equation}
or its two-sided analog on $\ell^2(\bbZ)$ where $a_n,b_n, u_n$ are indexed by $n\in\bbZ$ and $J$ is still
given by \eqref{1.1} (we refer to ``Jacobi matrix'' for the one-sided objects and ``two-sided Jacobi
matrix'' for the $\bbZ$ analog). Here the $a$'s and $b$'s parametrize the operator $J$ and $\{u_n\}
\in\ell^2$.

We recall that associated to each bounded Jacobi matrix, $J$, there is a unique probability measure, $\mu$,
of compact support in $\bbR$ characterized by either of the equivalent
\begin{SL}
\item[(a)] $J$ is unitarily equivalent to multiplication by $x$ on $L^2 (\bbR,d\mu)$ by a unitary with
$(U\delta_1)(x)\equiv 1$.
\item[(b)] $\{a_n, b_n\}_{n=1}^\infty$ are the recursion parameters for the orthogonal polynomials for $\mu$.
\end{SL}
We'll call $\mu$ the spectral measure for $J$.

By a finite gap Jacobi matrix, we mean one whose essential spectrum is a finite union
\begin{equation} \lb{1.2}
\sigma_\ess (J) = \fre \equiv [\alpha_1,\beta_1]\cup \cdots \cup[\alpha_{\ell+1}, \beta_{\ell+1}]
\end{equation}
where
\begin{equation} \lb{1.3}
\alpha_1 < \beta_1 < \cdots < \alpha_{\ell+1} < \beta_{\ell+1}
\end{equation}
$\ell$ counts the number of gaps.

We will see that for each such $\fre$, there is an $\ell$-dimensional torus of two-sided $J$'s with
$\sigma (J) =\fre$ and $J$ almost periodic and regular in the sense of
Stahl--Totik \cite{StT}. We'll present the theory of perturbations of such $J$ that decay but not too
slowly. Our interest will be in spectral types, Lieb--Thirring bounds on the discrete eigenvalues and on
orthogonal polynomial asymptotics. We begin in Section~\ref{s2} with a discussion of the case $\ell=0$
where we may as well take $\fre = [-2,2]$, in which the ($0$-dimensional) torus is the single point with
$a_n\equiv 1$, $b_n\equiv 0$. We'll discuss the theory in that case as background.

Section~\ref{s3} describes the isospectral torus. Section~\ref{s4} discusses the results for general
finite gap sets with a mention of the special results that occur if each $[\alpha_j,\beta_j]$ has
rational harmonic measure, in which case the isospectral torus contains only periodic $J$'s.
Section~\ref{s5} discusses a method for the general finite gap case which relies on the realization
of $\bbC\cup\{\infty\}\setminus\fre$ as the quotient of the unit disk in $\bbC$ by a Fuchsian
group---a method pioneered by Peherstorfer--Sodin--Yuditskii \cite{PY,SY}, who were motivated by
earlier work of Widom \cite{Widom} and Aptekarev \cite{Apt}.

While we focus on the finite gap case, we note there are some results on general compact $\fre$'s
in $\bbR$ with various restrictive conditions on $\fre$ (e.g., Parreau--Widom). Peherstorfer--Yuditskii \cite{PY}
discuss homogeneous sets and Christiansen \cite{Chr1,Chr2} proves versions of Theorems~\ref{T4.3}
and \ref{T4.5} below for suitable infinite gap $\fre$'s. See \cite{ErYu,Y} for discussion of properties of some $\fre$'s and examples relevant to this area.

These works suggest forms of two conditions in the finite gap case suitable for generalization. Let
$\rho_\fre$ be the equilibrium measure for $\fre$ and $G_\fre(z)$ its Green's function ($-\calE
(\rho_\fre) - \Phi_{\rho_\fre}(z)$ in terms of \eqref{3.1}/\eqref{3.2}). Then \eqref{4.5} should read
\begin{equation} \lb{1.4}
\sum_{n=1}^N G_\fre (x_n) <\infty
\end{equation}
(which for finite gap $\fre$ is equivalent to \eqref{4.5}). Similarly, \eqref{4.6} should read
\begin{equation} \lb{1.5}
\int \log [f(x)]\, d\rho_\fre (x) >-\infty
\end{equation}
(again, for finite gap $\fre$ equivalent to \eqref{4.6}).

\medskip

J.S.C.\ and M.Z.\ would like to thank Caltech for its hospitality where this manuscript was written.

\section{The Zero Gap Case} \lb{s2}

The Jacobi matrix, $J_0$, with $a_n\equiv 1$, $b_n\equiv 0$ is called the free Jacobi matrix. It is
easy to see that the solutions of $J_0 u=\lambda u$ are given by solving
\begin{equation} \lb{2.1}
\alpha + \alpha^{-1} = \lambda
\end{equation}
for $\lambda\in\bbC$ and setting
\begin{equation} \lb{2.2}
u_n = \f{1}{2i}\, (\alpha^n - \alpha^{-n})
\end{equation}
This is polynomially bounded in $n$ if and only if $\abs{\alpha}=1$. If $\alpha=e^{ik}$, then
\begin{equation} \lb{2.3}
\lambda = 2\cos k, \qquad u_n = \sin (kn)
\end{equation}
Thus,
\begin{equation} \lb{2.4x}
\sigma (J_0) = [-2,2], \quad \lambda\in (-2,2) \Rightarrow\text{all eigenfunctions bounded}
\end{equation}
(by all eigenfunctions here, we mean without the boundary condition $u_0=0$).

In identifying the spectral type, the following is useful:

\begin{theorem}\lb{T2.1} Let $J$ be a Jacobi matrix with ${a_n} + {a_n^{-1}} +\abs{b_n}$ bounded.
Suppose all solutions of $(Ju)_n = \lambda u_n$ {\rm{(}}where $u_0, u_1$ are arbitrary{\rm{)}} are bounded
for $\lambda\in S\subset\bbR$. Then the spectrum of $J$ on $S$ is purely a.c.\ in the sense that if
$\mu$ is the spectral measure of $J$ and $\abs{\,\cdot\,}$ is Lebesgue measure, then
\begin{equation} \lb{2.4}
\mu_\s (S) =0, \qquad T\subset S \text{ and } \abs{T}>0\Rightarrow \mu_\ac (T) >0
\end{equation}
\end{theorem}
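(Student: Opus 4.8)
The plan is to use the theory of subordinate solutions à la Gilbert--Pearson, combined with the Jitomirskaya--Last inequalities relating the boundary behavior of the $m$-function to transfer matrix norms. First I would recall that the absolutely continuous part of $\mu$ is carried by the set
\begin{equation*}
\Sigma_\ac = \{\lambda \in \bbR : \text{no solution of } (Ju)_n = \lambda u_n \text{ is subordinate}\},
\end{equation*}
up to sets of Lebesgue measure zero; more precisely, $\mu_\ac$ is mutually absolutely continuous with $\chi_{\Sigma_\ac}\, dx$ restricted to the essential support, and the singular part $\mu_\s$ is carried by the complement together with the set where the subordinate solution lies in $\ell^2$ at infinity. (A solution $u$ is \emph{subordinate} if $\|u\|_L / \|v\|_L \to 0$ as $L \to \infty$ for every linearly independent solution $v$, where $\|\cdot\|_L$ is the norm on $\{1,\dots,L\}$ with the usual interpolation at the endpoint.) This is the standard Gilbert--Pearson/Jitomirskaya--Last machinery for Jacobi matrices; I would cite it rather than reprove it.

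Next I would feed in the hypothesis. If \emph{all} solutions at energy $\lambda$ are bounded, then for any two linearly independent solutions $u, v$ there are constants $0 < c \le C < \infty$ with $c \le \|u\|_L, \|v\|_L \le C\sqrt{L}$ and, crucially, a lower bound $\|u\|_L \ge c'\sqrt{L}$: boundedness of all solutions forces the transfer matrices $T_L(\lambda)$ to have norms bounded above, and since $\det T_L = \prod a_n^{-1} \cdot(\text{something bounded})$ — here one uses that $a_n + a_n^{-1}$ is bounded, so $\prod_{n=1}^L a_n$ stays between $e^{-KL}$ and $e^{KL}$ but in fact the relevant product telescopes harmlessly — one gets that $\|T_L(\lambda)^{-1}\|$ is also bounded, hence all solutions are bounded \emph{below} away from zero as well. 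Summing, $\|u\|_L^2$ grows linearly: $\|u\|_L^2 \asymp L$ for every solution. Therefore no solution is subordinate at any $\lambda \in S$, and no solution is in $\ell^2$. Consequently $\mu_\s(S) = 0$, and for any Borel $T \subseteq S$ the a.c.\ part of $\mu$ restricted to $T$ has density bounded below (via the Jitomirskaya--Last bound $\Ima m(\lambda + i0) \ge$ const, uniformly on compact subsets of $S$), so $|T| > 0 \Rightarrow \mu_\ac(T) > 0$.

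The step I expect to be the main obstacle — or at least the one requiring the most care — is controlling the determinant/Wronskian to pass from "all solutions bounded above" to "all solutions bounded below," i.e. establishing the two-sided estimate $\|u\|_L^2 \asymp L$ rather than merely $\|u\|_L = O(\sqrt{L})$. The Wronskian $W(u,v) = a_n(u_{n+1}v_n - u_n v_{n+1})$ is $n$-independent, and with $a_n + a_n^{-1}$ bounded we have $a_n$ bounded above and below, so $|u_{n+1} v_n - u_n v_{n+1}|$ is bounded below; combined with the upper bounds on $|u_n|, |v_n|$ this prevents any solution from decaying, which is exactly what rules out subordinacy and $\ell^2$ membership. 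One then has to be slightly careful that the Gilbert--Pearson criterion is being applied in the form that gives \emph{both} conclusions in \eqref{2.4} — the vanishing of $\mu_\s$ on $S$ and the strict positivity of $\mu_\ac$ on positive-measure subsets — but both follow from the single statement that, for a.e.\ $\lambda \in S$, the $m$-function has a finite nonzero boundary value with strictly positive imaginary part, uniformly on compacts. I would close by remarking that the boundedness hypothesis on $a_n + a_n^{-1} + |b_n|$ is used only to keep $a_n$ uniformly bounded away from $0$ and $\infty$, which is what makes the Wronskian argument and the transfer-matrix norm comparison work.
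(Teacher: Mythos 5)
The paper itself gives no proof of Theorem~\ref{T2.1}---its remark only points to the Gilbert--Pearson/Jitomirskaya--Last subordinacy route (and to Simon \cite{S253} for a more elementary argument)---and your proposal is a correct implementation of precisely that subordinacy route: the constancy of the Wronskian $a_n(u_{n+1}v_n-u_nv_{n+1})$, together with $a_n$ bounded above and away from zero, forces $\|u\|_L^2\asymp L$ for every solution, so no solution is subordinate at any $\lambda\in S$, and the essential-support characterization of $\mu_\ac$ plus the carrier description of $\mu_\s$ then give both halves of \eqref{2.4}. The only blemishes are the garbled aside about $\det T_L$ and the unneeded (and unjustified, since your boundedness constants are pointwise in $\lambda$) claim that $\Ima m(\lambda+i0)$ is bounded below uniformly on compact subsets of $S$; neither affects the validity of the argument.
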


\begin{remark} The modern approach to this theorem would use the inequalities of Jitomirskaya--Last \cite{JL1,JL2}
or Gilbert--Pearson subordinacy theory \cite{Gil,GP,KP,Pear} to handle $\mu_\s$ and the results of Last--Simon \cite{LS}
for the a.c.\ spectrum. The simplest proof for this special case (where the above ideas are overkill) is perhaps
Simon \cite{S253}.
\end{remark}

A simple variation of parameters in the difference equation implies that under $\ell^1$ perturbations, eigenfunctions
remain bounded when $\lambda\in (-2,2)$, that is,

\begin{theorem} \lb{T2.2} Let $J$ be a Jacobi matrix with
\begin{equation} \lb{2.5}
\sum_{n=1}^\infty\, \abs{a_n-1} + \abs{b_n} <\infty
\end{equation}
Then $\sigma_\ess(J) = [-2,2]$ and the spectrum on $(-2,2)$ is purely a.c.
\end{theorem}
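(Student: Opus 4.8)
The plan is to reduce Theorem~\ref{T2.2} to Theorem~\ref{T2.1} by showing that, under the hypothesis \eqref{2.5}, every solution of the difference equation $(Ju)_n=\lambda u_n$ stays bounded for $\lambda\in(-2,2)$; the identification of $\sigma_\ess(J)$ will come from a separate compactness argument. For the essential spectrum, I would argue that \eqref{2.5} forces $a_n\to 1$ and $b_n\to 0$, so $J-J_0$ is a norm limit of finite-rank operators, hence compact; Weyl's theorem on invariance of the essential spectrum then gives $\sigma_\ess(J)=\sigma_\ess(J_0)=\sigma(J_0)=[-2,2]$, using \eqref{2.4x}.

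The heart of the matter is the boundedness of solutions. Fix $\lambda=2\cos k\in(-2,2)$ with $k\in(0,\pi)$, and write the free solutions as $u_n^{(\pm)}=e^{\pm ikn}$ (or the real pair $\cos kn,\sin kn$), which form a basis of the free difference equation with nonvanishing Wronskian/Casoratian. For the perturbed equation I would recast the three-term recursion as a first-order system $\binom{u_{n+1}}{u_n}=A_n\binom{u_n}{u_{n-1}}$, with $A_n$ the one-step transfer matrix, and compare it to the constant free transfer matrix $A_0$ via the substitution $\binom{u_{n+1}}{u_n}=T_n v_n$, where $T_n$ diagonalizes $A_0$. This yields $v_{n+1}=(\mathbf{1}+E_n)v_n$ where the error $E_n$ satisfies $\|E_n\|\le C(\lvert a_n-1\rvert+\lvert b_n\rvert)$ uniformly for $\lambda$ in compact subsets of $(-2,2)$ (the constant blows up as $\lambda\to\pm2$ because the eigenvalues of $A_0$ collide, which is exactly why the claim is only for the open interval). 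Then $\|v_{n+1}\|\le\prod_{m=1}^n(1+\|E_m\|)\,\|v_1\|\le\exp\bigl(\sum_{m=1}^\infty\|E_m\|\bigr)\|v_1\|<\infty$ by \eqref{2.5}, so $\|v_n\|$ is bounded, and since $T_n$ is bounded with bounded inverse, $u_n$ is bounded as well. Equivalently, one can phrase this as a discrete variation-of-parameters (summation-by-parts) formula $u_n=c_+u_n^{(+)}+c_-u_n^{(-)}+\sum_{m<n}(\text{kernel})_{n,m}\,[(a_m-1)(\cdots)+b_m u_m]$ and close it by a Gr\"onwall-type iteration; the two approaches are the same estimate in different dress.

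Once boundedness of all solutions on $S=(-2,2)$ is established, Theorem~\ref{T2.1} applies directly: $\mu_\s((-2,2))=0$ and $\mu_\ac(T)>0$ for every $T\subset(-2,2)$ of positive Lebesgue measure, which is the assertion that the spectrum on $(-2,2)$ is purely a.c. The main obstacle is the uniformity of the error bound $\|E_n\|\le C(\lambda)(\lvert a_n-1\rvert+\lvert b_n\rvert)$ together with tracking how $C(\lambda)$ degenerates near the band edges; this is a routine but slightly delicate linear-algebra estimate on how well-conditioned the diagonalization $T_n$ of $A_0(\lambda)$ is, and it is precisely the point where the open interval $(-2,2)$, rather than the closed band $[-2,2]$, is needed. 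Everything else—compactness of $J-J_0$, Weyl's theorem, and the summable-product/Gr\"onwall step—is standard.
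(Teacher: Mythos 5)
Your proposal is correct and follows essentially the same route as the paper, which derives Theorem~\ref{T2.2} by exactly this variation-of-parameters argument (showing all solutions of $(Ju)_n=\lambda u_n$ remain bounded for $\lambda\in(-2,2)$ under $\ell^1$ perturbations) followed by an appeal to Theorem~\ref{T2.1}; the Weyl/compactness identification of $\sigma_\ess(J)$ is left implicit there but is the standard complement. Your transfer-matrix estimate, including the observation that the constant degenerates at $\lambda=\pm2$, is the right fleshing-out of what the paper calls ``a simple variation of parameters.''
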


\begin{remark} The continuum analog of Theorem~\ref{T2.2} goes back to Titchmarsh \cite{Titch}.
\end{remark}

Thus, the spectrum outside $[-2,2]$ is a set of eigenvalues $\{x_n\}_{n=1}^N$ where $N\in\mathbb{N}\cup\{\infty\}$.
\eqref{2.5} has implications for these eigenvalues.

\begin{theorem} \lb{T2.3} Let $\{x_n\}_{n=1}^N$ be the eigenvalues of a Jacobi matrix. Then
\begin{equation} \lb{2.6}
\sum_{n=1}^N\, (x_n^2 - 4)^{1/2} \leq \sum_{n=1}^\infty\, \abs{b_n} + 4 \sum_{n=1}^\infty\, \abs{a_n-1}
\end{equation}
\end{theorem}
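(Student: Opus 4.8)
The plan is to follow the Birman--Schwinger approach of Hundertmark and Simon to Lieb--Thirring bounds for Jacobi matrices. I would first make three reductions. \emph{Symmetry}: the unitary $\delta_n\mapsto(-1)^n\delta_n$ sends $J_0$ to $-J_0$ and $b_n$ to $-b_n$ while fixing the $a_n$, so the eigenvalues of $J$ below $-2$ are the negatives of the eigenvalues above $2$ of a Jacobi matrix with the same right side of \eqref{2.6}; it thus suffices to control the eigenvalues above $2$. \emph{Sign of the perturbation}: writing $J-J_0=\operatorname{diag}(b_n)+K$ with $K$ the off-diagonal part (entries $a_n-1$), the bound $2|a_n-1|\,|\overline{u_n}u_{n+1}|\le|a_n-1|(|u_n|^2+|u_{n+1}|^2)$ gives $\pm K\le D:=\operatorname{diag}(|a_{n-1}-1|+|a_n-1|)$ (with $a_0:=1$), hence $J\le J_0+W$ with $W:=\operatorname{diag}(b_n^+)+D\ge0$ and $\operatorname{Tr}W=\sum_n b_n^++2\sum_n|a_n-1|$; treating the lower edge the same way ($J\ge J_0-\operatorname{diag}(b_n^-)-D$, which after the $(-1)^n$ unitary is $J_0+\operatorname{diag}(b_n^-)+D$) reduces \eqref{2.6}, with the stated constant, to
\begin{equation}\lb{eq:LTkey}
\sum_n\big(x_n(J_0+W)^2-4\big)^{1/2}\le\operatorname{Tr}W\qquad\text{for every }W\ge0,
\end{equation}
since $\sum b_n^++\sum b_n^-=\sum|b_n|$ while $2\sum|a_n-1|$ enters once at each edge. \emph{Finite rank}: in \eqref{eq:LTkey} one may take $W$ of finite rank, the general case following by truncation and lower semicontinuity (Fatou) of the left side under the convergence of the finitely many eigenvalues lying outside $[-2,2]$.

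To prove \eqref{eq:LTkey}, observe that since $W\ge0$ the operator $J_0+W$ has no spectrum below $-2$, and the layer-cake identity $(x^2-4)^{1/2}=\int_2^x t(t^2-4)^{-1/2}\,dt$ gives
\begin{equation}\lb{eq:LTcake}
\sum_n\big(x_n(J_0+W)^2-4\big)^{1/2}=\int_2^\infty\frac{t}{\sqrt{t^2-4}}\,N(t)\,dt,\qquad N(t):=\#\{n:x_n(J_0+W)>t\}.
\end{equation}
For $t>2$ the operator $t-J_0$ is positive with $\norm{(t-J_0)^{-1}}=(t-2)^{-1}$, and---writing $t=\beta+\beta^{-1}$ with $\beta\in(0,1)$---its matrix elements decay like $\tfrac{\beta}{1-\beta^2}\beta^{|m-n|}$ (with an image-charge correction on the half-line); the Birman--Schwinger principle then identifies $N(t)$ with the number of eigenvalues of the compact, positive operator $W^{1/2}(t-J_0)^{-1}W^{1/2}$ exceeding $1$. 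The crude bound $N(t)\le\operatorname{Tr}\big(W^{1/2}(t-J_0)^{-1}W^{1/2}\big)\le(t-2)^{-1}\operatorname{Tr}W$ makes the integral in \eqref{eq:LTcake} diverge at the edge $t=2$, so it must be improved: the point is that as $t\downarrow2$ the Birman--Schwinger operator is, to leading order in $t-2$, of rank one---the ``virtual state'' pinned to the threshold---so that the (possibly many) eigenvalues of $J_0+W$ just above $2$ are governed by a single quantity whose total contribution to \eqref{eq:LTcake} one estimates directly by $\operatorname{Tr}W$, while for $t$ bounded away from $2$ the trace bound already suffices. Carrying out and combining these two regimes is precisely the Hundertmark--Simon argument, and I expect it to be the main obstacle; it is the one place where the exponent $\tfrac12$ is essential (for the order-one moment $\sum(x_n-2)_+$ the elementary inequality $\operatorname{Tr}\big((J_0+W-2)_+\big)\le\operatorname{Tr}W$ already does the job).

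An alternative, in keeping with the sum-rule viewpoint of the rest of the paper, is to derive \eqref{eq:LTkey} from a Case-type sum rule. With the perturbation determinant $L(\beta):=\det\big(1+W(J_0-z)^{-1}\big)$, $z=\beta+\beta^{-1}$, which is analytic in $\bbD$, satisfies $L(0)=1$, and vanishes exactly at the $\beta_n\in(0,1)$ with $\beta_n+\beta_n^{-1}=x_n(J_0+W)$, one computes the coefficient of $\beta$ in $\log L$ in two ways---as $-\operatorname{Tr}W$ from the expansion at $z=\infty$, and via the Poisson--Jensen factorization of $L$ into a Blaschke product and an outer factor---to obtain
\begin{equation}\lb{eq:LTsr}
\sum_n\big(x_n(J_0+W)^2-4\big)^{1/2}=\operatorname{Tr}W+2\int_0^{2\pi}\cos\theta\,\log\big|L(e^{i\theta})\big|\,\frac{d\theta}{2\pi}.
\end{equation}
Then \eqref{eq:LTkey} becomes equivalent to nonpositivity of the boundary integral, which one would extract from the relation between $|L(e^{i\theta})|$ and the ratio of a.c.\ spectral densities of $J_0+W$ and $J_0$ at $x=2\cos\theta$. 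In either approach the difficulty concentrates at the spectral edges $\pm2$; the reductions, the interlacing and positivity inputs, and the arithmetic giving the constant $4$ are routine.
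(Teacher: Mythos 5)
The paper gives no proof of Theorem~\ref{T2.3} at all---it only attributes the result to Hundertmark--Simon \cite{HS}---so there is nothing internal to compare against; your proposal must be judged on its own. Your three reductions are correct and are exactly those of \cite{HS}: the $(-1)^n$ conjugation to treat the two spectral edges symmetrically, the operator inequality $\pm K\le D$ converting the off-diagonal perturbation into a diagonal one (this is precisely where the factor $4=2\times 2$ comes from), and the finite-rank approximation. They correctly reduce \eqref{2.6} to the single assertion that for a non-negative diagonal trace-class $W$,
\[
\sum_j \bigl(E_j(J_0+W)^2-4\bigr)^{1/2}\le \operatorname{Tr} W .
\]
But that assertion, with constant exactly $1$, \emph{is} the theorem. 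Remark~4 after the statement records that no constant $\gamma<1$ is admissible, and the rank-one example $b_1=g$ (eigenvalue $g+g^{-1}$, so $(E^2-4)^{1/2}=g-g^{-1}$ against $\operatorname{Tr}W=g$) shows the ratio tends to $1$. Your argument stops exactly here, with the acknowledgment that carrying out the threshold analysis ``is precisely the Hundertmark--Simon argument.'' That is a genuine gap, not a routine verification: everything before it is soft, and everything hard is in the omitted step.

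Moreover, the heuristic you offer for closing the gap would not deliver the needed constant. Splitting the energy integral into a region bounded away from $t=2$ (where you invoke $N(t)\le(t-2)^{-1}\operatorname{Tr}W$) and a near-threshold region governed by the rank-one ``virtual state'' produces, even if both pieces are controlled, a bound of the form $C\operatorname{Tr}W$ with $C>1$, which is too weak for \eqref{2.6}. The actual mechanism of \cite{HLT,HS} is not a two-regime decomposition but a monotonicity lemma for the partial sums $\sigma_n(t)=\sum_{j\le n}\mu_j(K_t)$ of Birman--Schwinger eigenvalues, which lets one evaluate each eigenvalue at the energy where it crosses $1$ and telescope against the trace without losing a constant; none of that appears in your write-up. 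The alternative sum-rule route is likewise not closed: the identity you write down is plausibly the first Case sum rule, but the claimed nonpositivity of $\int_0^{2\pi}\cos\theta\,\log|L(e^{i\theta})|\,d\theta$ is asserted rather than proved and is not an evident positivity (the weight $\cos\theta$ changes sign on the boundary circle); indeed, in the literature the logical flow runs the other way---the bound \eqref{2.6} is an \emph{input} to the sum-rule proofs of Theorem~\ref{T2.4} and Corollary~\ref{C2.5}, not a consequence of them.
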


\begin{remarks} 1. This implies
\begin{equation} \lb{2.7}
\sum_{n=1}^N \dist (x_n, [-2,2])^{1/2} \leq \frac12 \biggl(\, \sum_{n=1}^\infty \, \abs{b_n} +
4\sum_{n=1}^\infty\, \abs{a_n-1}\biggr)
\end{equation}

\smallskip
2. The analog of \eqref{2.7} in the continuum case is due to Lieb--Thirring \cite{LT2} who proved it
when the power $1/2$ is replaced by $p>1/2$ and the right side is replaced by $\abs{b_n}^{p+1/2}$,
$\abs{a_n-1}^{p+1/2}$ and $1/2$ by a suitable constant. They proved the analog is false if $p<1/2$ and conjectured
the result if $p=1/2$. This conjecture was proven by Weidl \cite{Weidl} with an alternate proof and optimal
constant by Hundermark--Lieb--Thomas \cite{HLT}. \eqref{2.7} and its $p>1/2$ analogs are called Lieb--Thirring inequalities after \cite{LT2}.

\smallskip
3. This theorem is a result of Hundertmark--Simon \cite{HS} who used a method inspired by \cite{HLT}.

\smallskip
4. \eqref{2.6} is optimal in the sense that its $p<1/2$ analog is false and one cannot put a constant
$\gamma <1$ in front of neither the $b$ sum nor the $a-1$ sum. The same also applies to \eqref{2.7}.

\smallskip
5. \eqref{2.6} implies $p>1/2$ analogs by an argument of Aizenman--Lieb \cite{AL}.

\smallskip
6. The one-half power in \eqref{2.6}/\eqref{2.7} is especially significant for the following reason:
\begin{equation} \lb{2.8}
x(z) = z+z^{-1}
\end{equation}
maps $\bbD$ to $\bbC\cup\{\infty\}\setminus [-2,2]$. Its inverse
\begin{equation} \lb{2.9x}
z(x) = \tfrac12\, \bigl( x - \sqrt{x^2 -4}\,\bigr)
\end{equation}
has a square root singularity at $x=\pm 2$. Thus, the finiteness of the left side of \eqref{2.6}/\eqref{2.7}
is equivalent to a Blaschke condition
\begin{equation} \lb{2.10x}
\sum_{n=1}^N\, (1 - \abs{z(x_n)}) <\infty
\end{equation}
\end{remarks}

\begin{theorem}\lb{T2.4} Let $J$ be a Jacobi matrix with $\sigma_\ess (J) =[-2,2]$ and Jacobi
parameters $\{a_n,b_n\}_{n=1}^\infty$. Suppose its spectral measure has the form
\begin{equation} \lb{2.9}
d\mu = f(x)\, dx + d\mu_\s
\end{equation}
where $d\mu_\s$ is singular with respect to $dx$. Suppose that $\{x_n\}_{n=1}^N$ are its pure points
outside $[-2,2]$. Consider the three conditions:
\begin{alignat}{2}
&\text{\rm{(a)}} \qquad && \sum_{n=1}^N \dist (x_n, [-2,2])^{1/2} <\infty \lb{2.10} \\
&\text{\rm{(b)}} \qquad && \int_{-2}^2 (4-x^2)^{-1/2} \log [f(x)]\, dx > -\infty \lb{2.11} \\
&\text{\rm{(c)}} \qquad && \lim_{n\to\infty} a_1 \dots a_n \text{ exists in } (0,\infty) \lb{2.12} \\
\intertext{Then any two conditions imply the third. Moreover, in that case,}
&\text{\rm{(d)}} \qquad && \sum_{n=1}^\infty (a_n -1)^2 + b_n^2 < \infty \lb{2.13} \\
&\text{\rm{(e)}} \qquad && \lim_{K\to\infty} \sum_{n=1}^K (a_n -1) \text{ and } \lim_{K\to\infty}
\sum_{n=1}^K b_n \text{ exist} \lb{2.14}
\end{alignat}
\end{theorem}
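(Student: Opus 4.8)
The plan is to read Theorem~\ref{T2.4} off from the $C_0$ (Case--Szeg\H{o}) sum rule; it is the $\ell=0$ prototype of the finite gap Szeg\H{o} theory of the later sections. First I would pass to the disk via the map $z(x)$ of \eqref{2.9x}, which takes $\bbC\cup\{\infty\}\setminus[-2,2]$ onto $\bbD$, and introduce $M(z)=-m(z+z^{-1})$, where $m(\zeta)=\int d\mu(x)/(x-\zeta)$ is the $m$-function of $J$. Then $M$ is analytic near $0$ with $M(z)=z+b_1z^2+\Oh(z^3)$, has poles in $\bbD$ exactly at the points $z(x_n)$, and satisfies $\Ima M(e^{i\theta})=\pi f(2\cos\theta)$ for a.e.\ $\theta$. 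The substitution $x=2\cos\theta$ identifies \eqref{2.11} with the statement $\log\bigl(\Ima M(e^{i\theta})/\sin\theta\bigr)\in L^1(d\theta)$; here the positive part of the integrand is integrable for free, since the a.c.\ part $f(2\cos\theta)\abs{\sin\theta}\,d\theta$ of the associated circle measure has total mass $\le1$ and $\log^+t\le t$, so the Szeg\H{o} integral in \eqref{2.11} always lies in $[-\infty,\infty)$---\eqref{2.11} is genuinely \emph{one-sided}. Similarly the square-root singularity of $z(x)$ at $\pm2$ (cf.\ the discussion of \eqref{2.10x} following Theorem~\ref{T2.3}) makes \eqref{2.10} equivalent to the Blaschke condition \eqref{2.10x}, i.e.\ to $-\sum_n\log\abs{z(x_n)}<\infty$, and $-\sum_n\log\abs{z(x_n)}$ lies in $[0,\infty]$---again one-sided.

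The engine is the step-by-step sum rule. Coefficient stripping relates $m$ to the $m$-function $m^{(1)}$ of the once-stripped matrix $J^{(1)}$, and, together with the interlacing of the eigenvalues of $J$ and $J^{(1)}$ outside $[-2,2]$, it gives $Z(\mu)=\log a_1+Z(\mu^{(1)})$, where
\[
Z(\mu)\;=\;\sum_n\log\abs{z(x_n)}\;+\;\frac{1}{4\pi}\int_0^{2\pi}\log\!\Bigl(\frac{\Ima M(e^{i\theta})}{\sin\theta}\Bigr)d\theta\,,
\]
a sum of two pieces each bounded above---the first by $0$, the second by a universal constant, again by the mass bound of the first paragraph. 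Iterating, $Z(\mu)=\log(a_1\cdots a_n)+Z(\mu^{(n)})$ for every $n$, and the $C_0$ sum rule asserts that one may let $n\to\infty$ with $Z(\mu^{(n)})\to0$, which reads
\[
Z(\mu)\;=\;\log\Bigl(\lim_{n\to\infty}a_1\cdots a_n\Bigr)\,,
\]
the limit existing in $(0,\infty)$ exactly when $Z(\mu)$ is finite. Here $\liminf_n Z(\mu^{(n)})\ge0$ is soft---weak-$*$ lower semicontinuity of the entropy term, the Killip--Simon argument---while the reverse, which amounts to excluding a singular inner factor in the Szeg\H{o}/Jost function attached to $M$ (equivalently, preventing $a_1\cdots a_n$ from decaying), is a form of Szeg\H{o}'s classical theorem transported to $[-2,2]$ through the Szeg\H{o} map to the circle.

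Granting the sum rule, the rest is bookkeeping. The Szeg\H{o}-integral term of $Z$ is always $<\infty$ and the eigenvalue sum is always $\le0$, so $Z(\mu)$ is finite iff both \eqref{2.10} and \eqref{2.11} hold; hence, using $Z(\mu)=\log\bigl(\lim_n a_1\cdots a_n\bigr)$: if (a) and (b) hold then $Z(\mu)$ is finite, so (c); if (b) and (c) hold then the eigenvalue sum is finite, so (a); if (a) and (c) hold then the Szeg\H{o} integral is finite, so (b). For \eqref{2.13}: the $x_n$ are bounded, so $\dist(x_n,[-2,2])^{3/2}\le C\dist(x_n,[-2,2])^{1/2}$, and on $(-2,2)$ one has $\sqrt{4-x^2}\le4/\sqrt{4-x^2}$; thus (a) and (b) imply both hypotheses of the Killip--Simon theorem, whose conclusion is exactly \eqref{2.13}. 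For \eqref{2.14}: by \eqref{2.13}, $a_n\to1$ and $\sum(a_n-1)^2<\infty$, so the partial sums of $\log a_n-(a_n-1)$ converge (their tail is $-\tfrac12\sum(a_n-1)^2$ plus an absolutely convergent series); since (c) makes $\sum_{n=1}^K\log a_n$ converge, $\sum_{n=1}^K(a_n-1)$ converges, and $\sum_{n=1}^K b_n$ converges by the same mechanism applied to the analogous $C_1$ (first-moment) sum rule---whose spectral side is finite under \eqref{2.10}--\eqref{2.11}---together with $\sum b_n^2<\infty$.

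The hard part will be the limiting step of the second paragraph: showing $Z(\mu^{(n)})\to0$, i.e.\ that $a_1\cdots a_n$ converges to a finite nonzero value and not merely stays bounded above and below. One inequality there is cheap (semicontinuity of the entropy); the reverse---no singular inner part, together with an upper bound on the a.c.\ entropy---is where (a form of) Szeg\H{o}'s theorem is unavoidable, and everything else (the conformal change of variables and the elementary power/weight comparisons) is routine.
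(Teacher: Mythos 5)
The paper itself gives no proof of Theorem~\ref{T2.4}: it is a survey, and the theorem is attributed to Shohat, Nevai, Peherstorfer--Yuditskii \cite{PYpams}, Killip--Simon \cite{KS}, and (in this precise form) Simon--Zlato\v{s} \cite{SZ}. Your machinery---the map \eqref{2.9x} to the disk, the $M$-function with $\Ima M(e^{i\theta})=\pi f(2\cos\theta)$, the one-sidedness of the Blaschke and entropy terms, the step-by-step $C_0$ sum rule plus entropy semicontinuity---is exactly the machinery of those references, and your peripheral deductions (the equivalence of \eqref{2.10} with the Blaschke condition, \eqref{2.13} via Theorem~\ref{T2.8} from the elementary weight comparisons, and the $a$-half of \eqref{2.14} from (c) together with \eqref{2.13}) are correct.

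There is, however, a genuine gap at the pivot. You assert the unconditional identity $Z(\mu)=\log\bigl(\lim_n a_1\cdots a_n\bigr)$ ``with the limit existing in $(0,\infty)$ exactly when $Z(\mu)$ is finite,'' and you read all three pairwise implications off of it. The ``only if'' half of that equivalence is false: condition (c) does not imply $Z(\mu)>-\infty$. The paper itself supplies the counterexample in the remark following Theorem~\ref{T2.7}: there exist Jacobi matrices for which \eqref{2.13} and \eqref{2.14} hold---and these two force (c), since $a_n\to1$, $\sum(a_n-1)^2<\infty$, and convergence of $\sum(a_n-1)$ make $\sum\log a_n$ converge---while \eqref{2.10} and \eqref{2.11} both fail, so that both constituents of $Z(\mu)$ are $-\infty$. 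For such $J$ one has $Z(\mu^{(n)})\equiv-\infty$, so the limiting step you isolate as ``the hard part'' is not merely hard but false without further hypotheses. Consequently your derivations of (b)$+$(c)$\Rightarrow$(a) and (a)$+$(c)$\Rightarrow$(b) are circular: they invoke ``$Z(\mu)$ is finite because the limit in (c) exists,'' which is exactly what can fail. The actual arguments in \cite{KS,SZ} keep the two one-sided inequalities separate: iterating the step-by-step rule with the universal upper bound $Z(\mu^{(n)})\le C$ gives $Z(\mu)\le\log(a_1\cdots a_n)+C$ (so finiteness of $Z$ bounds $a_1\cdots a_n$ below, and (a)$+$(b)$\Rightarrow$(c) after the semicontinuity step), while the Jensen/semicontinuity direction bounds each \emph{finite} partial sum of the eigenvalue term and the entropy term from above in terms of $\limsup\log(a_1\cdots a_n)$, which is what yields (a)$+$(c)$\Rightarrow$(b) and (b)$+$(c)$\Rightarrow$(a). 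The same caution applies to your appeal to the $C_1$ sum rule for the $b$-half of \eqref{2.14}: the idea is right, but one must use the direction of that rule that is actually valid under \eqref{2.10}--\eqref{2.11}.
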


\begin{remarks} 1. \eqref{2.10} is called a critical Lieb--Thirring inequality. \eqref{2.11} is the Szeg\H{o}
condition.

\smallskip
2. Since $f\in L^1$, the integral in \eqref{2.11} can only diverge to $-\infty$. That is, the integral
over $\log_+$ is always finite and \eqref{2.11} is equivalent to the integral converging absolutely.

\smallskip
3. By a result of Ullman \cite{Ull}, $\sigma_\ess (J) = [-2,2]$ and $f(x) >0$ for a.e.\ $x$ in $[-2,2]$
implies $\lim_{n\to\infty} (a_1 \dots a_n)^{1/n} =1$, so \eqref{2.12} can be thought of as a second
term in the asymptotics of $\f1n \log(a_1 \dots a_n)$.

\smallskip
4. Condition (c) can be thought of as three statements: $\limsup < \infty$, $\liminf > 0$, and $\limsup
= \liminf$. The full strength of (c) is not always needed. For example, (a) plus $\limsup >0$ implies (b)
and the rest of (c).

\smallskip
5. This result can be thought of as an analog of a theorem of Szeg\H{o} for OPUC \cite{Sz20} (see also \cite[Ch.\ 2]{OPUC1}). That (b) $\Rightarrow$ (c), if there are no eigenvalues, is due to Shohat \cite{Sho} and that (b) $\Leftrightarrow$ (c), if there are finitely many $x$'s, is due to Nevai \cite{Nev79}. The general (a) $+$ (b) $\Rightarrow$ (c) is due to Peherstorfer--Yuditskii \cite{PYpams}
and the essence of this theorem is from Killip--Simon \cite{KS}, although the precise theorem is from Simon--Zlato\v{s} \cite{SZ}.
\end{remarks}

\begin{corollary} \lb{C2.5} If \eqref{2.5} holds, then so does \eqref{2.11}.
\end{corollary}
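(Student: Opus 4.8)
The plan is to read this off directly from Theorem~\ref{T2.4}: under the hypothesis \eqref{2.5} I will verify conditions (a) and (c) of that theorem, and then the statement ``any two conditions imply the third'' produces (b), which is exactly \eqref{2.11}. Before invoking Theorem~\ref{T2.4} one must know that its standing hypothesis $\sigma_\ess(J) = [-2,2]$ holds; but \eqref{2.5} is precisely the hypothesis of Theorem~\ref{T2.2}, so indeed $\sigma_\ess(J) = [-2,2]$, and we may write $d\mu = f(x)\,dx + d\mu_\s$ with $\{x_n\}_{n=1}^N$ the pure points outside $[-2,2]$.

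Condition (a), i.e.\ \eqref{2.10}, is immediate from the Lieb--Thirring bound of Theorem~\ref{T2.3}: by \eqref{2.7},
\[
\sum_{n=1}^N \dist(x_n, [-2,2])^{1/2} \le \tfrac12\Bigl(\sum_{n=1}^\infty \abs{b_n} + 4\sum_{n=1}^\infty \abs{a_n-1}\Bigr),
\]
and the right-hand side is finite by \eqref{2.5}. Condition (c), i.e.\ \eqref{2.12}, follows from the elementary fact that $\sum_{n=1}^\infty \abs{a_n-1} < \infty$ forces $a_n \to 1$ and hence $\abs{\log a_n} \le C\abs{a_n-1}$ for large $n$; thus $\sum_n \abs{\log a_n} < \infty$, so $\log(a_1\cdots a_n) = \sum_{k=1}^n \log a_k$ converges and $a_1\cdots a_n$ has a strictly positive, finite limit.

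Having checked (a) and (c), Theorem~\ref{T2.4} gives (b) $=$ \eqref{2.11}, which is the assertion. There is no real obstacle here: all the analytic content is already packaged in Theorem~\ref{T2.4}, and the corollary only asks us to feed it two harmless inputs — a finite Lieb--Thirring sum and a convergent infinite product. If anything, the one point deserving a moment's care is the order of operations, namely first using Theorem~\ref{T2.2} to secure $\sigma_\ess(J) = [-2,2]$ so that Theorem~\ref{T2.4} even applies.
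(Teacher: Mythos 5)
Your proposal is correct and follows exactly the paper's own route: verify condition (a) via the Lieb--Thirring bound of Theorem~\ref{T2.3} and condition (c) via absolute convergence of $\prod a_n$, then invoke Theorem~\ref{T2.4} to obtain \eqref{2.11}. The only difference is that you spell out the (harmless, and sensible) preliminary step of securing $\sigma_\ess(J)=[-2,2]$ from Theorem~\ref{T2.2}, which the paper leaves implicit.
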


\begin{proof} \eqref{2.5} implies $\prod_{n=1}^\infty a_n$ converges absolutely and, by Theorem~\ref{T2.3},
it implies \eqref{2.10}. Thus, \eqref{2.11} holds by Theorem~\ref{T2.4}.
\end{proof}

\begin{remarks} 1. This result was a conjecture of Nevai \cite{Nev92}.

\smallskip
2. It was proven by Killip--Simon \cite{KS}. It was the need to complete the proof of this that motivated
Hundertmark--Simon \cite{HS}.
\end{remarks}

There is a close connection between these conditions and asymptotics of the OPRL:

\begin{theorem} \lb{T2.6} Let $\{p_n(x)\}_{n=0}^\infty$ be the orthonormal polynomials for a Jacobi matrix,
$J$, obeying the conditions {\rm{(a)--(c)}} of Theorem~\ref{T2.4}. Then uniformly for $x$ in compact
subsets of \,$\bbC\cup\{\infty\}\setminus [-2,2]$,
\begin{equation} \lb{2.15}
\lim_{n\to\infty} \f{p_n(x)}{\big[\f12(x+\sqrt{x^2 -4}\,)\big]^n}
\end{equation}
exists and is analytic with zeros only at the $x_n$'s.
\end{theorem}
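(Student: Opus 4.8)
The plan is to conformalize to the disk, recognize the quotient in \eqref{2.15} as $z^n$ times an orthonormal polynomial, and identify the limit with the Jost function of $J$. Put $x(z)=z+z^{-1}$, a conformal bijection of $\bbD$ onto $\bbC\cup\{\infty\}\setminus[-2,2]$ with $x(0)=\infty$; by \eqref{2.9x} one has $\tfrac12(x+\sqrt{x^2-4})=z(x)^{-1}$, so the quantity in \eqref{2.15} is exactly $\pi_n(z):=z^np_n(x(z))$ with $z=z(x)\in\bbD$. Since $p_n$ has degree $n$ and $x(z)$ has a simple pole at $0$, the factor $z^n$ removes that pole: $\pi_n$ is in fact a polynomial in $z$ (of degree $2n$), hence analytic on all of $\bbD$, with $\pi_n(0)=(a_1\cdots a_n)^{-1}$, which converges to a limit in $(0,\infty)$ by hypothesis (c)/\eqref{2.12}. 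Thus the assertion is equivalent to: $\pi_n\to\pi_\infty$ uniformly on compact subsets of $\bbD$, with $\pi_\infty$ analytic on $\bbD$ and vanishing exactly at the points $z_m:=z(x_n)$.

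First I would introduce the Jost solution $\psi_n(z)$, i.e.\ the solution of $a_nu_{n+1}+b_nu_n+a_{n-1}u_{n-1}=x(z)u_n$ normalized by $z^{-n}\psi_n(z)\to1$; this is the subdominant (decaying) solution, and the decay encoded in the consequences (d)/\eqref{2.13}, (e)/\eqref{2.14} of the hypotheses is exactly what makes it well defined with $z^{-n}\psi_n(z)\to1$ locally uniformly on $\bbD$ (away from the $z_m$). The Jost function is then a fixed normalization of the constant-in-$n$ Wronskian $u(z):=W\big(\psi(z),p(x(z))\big)$. The crucial analytic input, which is essentially the content of Theorem~\ref{T2.4} and its proof through the step-by-step $C_0$ sum rule of Killip--Simon \cite{KS} and Simon--Zlato\v{s} \cite{SZ}, is that (after normalization) $u$ extends analytically to $\bbD$ with $u(0)=\lim_n(a_1\cdots a_n)^{-1}\in(0,\infty)$, has no zeros in $\bbD$ other than simple zeros at the $z_m$, and carries the full factorization $u=B\cdot O_f$, with $B$ the Blaschke product over the $z_m$ (convergent precisely when \eqref{2.10} holds, by Remark~6 following Theorem~\ref{T2.3}) and $O_f$ the outer function whose boundary modulus is built from $f$; here the Szeg\H{o} condition (b)/\eqref{2.11} is what makes $O_f$ well defined, and condition (c)/\eqref{2.12} is precisely what excludes a singular inner factor.

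The remaining step is the asymptotic formula, an essentially algebraic computation. For $z\in\bbD$ the solution space of the recursion is spanned by the subdominant $\psi_n(z)$ and any dominant solution $\chi_n(z)$ normalized by $z^n\chi_n(z)\to1$; writing $p_n(x(z))=A(z)\psi_n(z)+B(z)\chi_n(z)$, multiplying by $z^n$, and letting $n\to\infty$, the first term dies while $z^n\chi_n(z)\to1$, so $\pi_n(z)\to B(z)$ locally uniformly on $\bbD\setminus(\{0\}\cup\{z_m\})$ by the uniform Jost asymptotics. Cramer's rule gives $B=W(\psi,p)/W(\psi,\chi)$, whose denominator is the constant $z^{-1}-z$ (computed from the large-$n$ form of the Wronskian), so $\pi_\infty(z)=\dfrac{z\,u(z)}{1-z^2}$. (As a check: in the free case $\psi_n(z)=z^n$, so $u(z)=z^{-1}$ and $\pi_\infty(z)=1/(1-z^2)$, consistent with $p_n(2\cos\theta)=\sin((n+1)\theta)/\sin\theta$.) Meanwhile the identity $\int|p_n|^2\,d\mu=1$ gives, after pushing to $\partial\bbD$, a uniform bound on $\|\pi_nO_f\|_{L^2(\partial\bbD)}$; since $\pi_nO_f$ lies in the Smirnov class and $O_f$ is zero-free in $\bbD$, this makes $\{\pi_n\}$ locally bounded on all of $\bbD$, so by Vitali the locally uniform convergence on the dense set $\bbD\setminus(\{0\}\cup\{z_m\})$ extends to all of $\bbD$. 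Finally the stated properties of \eqref{2.15} are read off from $\pi_\infty=z\,u/(1-z^2)$: it is analytic on $\bbD$ (the pole of $(1-z^2)^{-1}$ sits at $z=\pm1\in\partial\bbD$), its zeros are exactly those of $u$, i.e.\ exactly the $z_m$, and $\pi_\infty(0)=u(0)=\lim(a_1\cdots a_n)^{-1}$; transporting back by $x=x(z)$ yields analyticity on $\bbC\cup\{\infty\}\setminus[-2,2]$ with zeros only at the $x_n$.

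The main obstacle is the analytic input of the second step: constructing the Jost function in this generality and, above all, showing it has no singular inner factor, under hypotheses far weaker than the classical $\ell^1$ (or $\sum n(|a_n-1|+|b_n|)<\infty$) condition for which the Jost series converges absolutely. That ``no singular part'' claim is the place where condition (c)/\eqref{2.12} genuinely enters, through the sum-rule and entropy-semicontinuity mechanism that also underlies Theorem~\ref{T2.4}; once it is granted, everything else --- the Jost asymptotics, the Wronskian identity, and the normal-families bookkeeping near $z=0$ and near the $z_m$ --- is routine. It is cleanest to treat first the eigenvalue-free case $N=0$ (where $B\equiv1$) and only afterwards reinstate the zeros and boundary poles contributed by the $z_m$.
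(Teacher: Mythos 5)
First, a caveat about the comparison you asked for: the paper contains no proof of Theorem~\ref{T2.6}. It is a survey item, attributed in the remarks to Szeg\H{o} \cite{Sz20,Sz22a} when there are no eigenvalues and to Peherstorfer--Yuditskii \cite{PYpams} in general, so your proposal can only be judged on its own merits. Your route is the Jost-function approach of Killip--Simon \cite{KS} and Damanik--Simon \cite{Jost1} rather than the Szeg\H{o}-function argument of \cite{PYpams}, and its algebraic skeleton is sound: $\pi_n(z)=z^np_n(x(z))$ is indeed a polynomial of degree $2n$ with $\pi_n(0)=(a_1\cdots a_n)^{-1}$; the Cramer/Wronskian computation correctly identifies the putative limit as $W(\psi,p)/W(\psi,\chi)$ with constant denominator; and the $L^2(\partial\bbD)$ bound on $\pi_nO_f$ combined with zero-freeness of the outer factor and Vitali is a legitimate way to upgrade convergence off the exceptional set to locally uniform convergence on all of $\bbD$.

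The genuine gap is the one you flag yourself, but it is not a peripheral ``obstacle''---it is the theorem. Under hypotheses (a)--(c) alone (no $\ell^1$ or $\sum n(|a_n-1|+|b_n|)<\infty$ assumption), the existence of a Jost solution with $z^{-n}\psi_n(z)\to1$ locally uniformly, and of a Jost function admitting the factorization $u=B\cdot O_f$ with convergent Blaschke product and \emph{no singular inner factor}, is precisely the content of the step-by-step sum rules and entropy semicontinuity in \cite{KS,SZ,Jost1,PYpams}; granting it amounts to granting the conclusion. A cleaner way to stay inside the paper's own logic would be: (a)--(c) imply \eqref{2.13} and \eqref{2.14} by Theorem~\ref{T2.4}, and then Theorem~\ref{T2.7} already yields existence of the limit \eqref{2.15}, leaving only the identification of its zero set (which again needs the Blaschke-times-outer structure). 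There is also a normalization inconsistency you should repair: you assert $u$ is analytic on $\bbD$ with $u(0)=\lim(a_1\cdots a_n)^{-1}$, yet your own free-case check gives $u(z)=z^{-1}$, singular at $z=0$, and your formula $\pi_\infty(z)=zu(z)/(1-z^2)$ needs exactly that singularity to make $\pi_\infty(0)\neq0$; the raw Wronskian and the normalized Jost function differ by a factor singular at the origin and are conflated in your write-up. None of this invalidates the strategy, but as written the argument presupposes the substance of what is to be proved.
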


\begin{remarks} 1. When there are no $x_n$'s, this is essentially a result of Szeg\H{o} \cite{Sz20,Sz22a}.
For the general case, see Peherstorfer--Yuditskii \cite{PYpams}.

\smallskip
2. This is called Szeg\H{o} asymptotics.

\smallskip
3. The reason for the different sign in \eqref{2.9x} and \eqref{2.15} is that, as $n\to\infty$, $p_n(x)\to\infty$,
$\abs{z(x)}<1$ so $z(x)^n p_n(x)$ is bounded. The other solution of \eqref{2.8} is $z(x)^{-1}$ and it is that
solution that appears in the denominator of \eqref{2.15}.
\end{remarks}

While conditions (a)--(c) of Theorem~\ref{T2.4} are sufficient for Szeg\H{o} asymptotics, they are not
necessary:

\begin{theorem} \lb{T2.7} Let $J$ be a Jacobi matrix whose parameters obey \eqref{2.13} and \eqref{2.14}. Then
\eqref{2.15} holds on compact subsets of $\bbC\cup\{\infty\}\setminus [-2,2]$. Conversely, if \eqref{2.15}
holds uniformly on the circle $\vert x\vert=R$ for some $R>2$, then \eqref{2.13} and
\eqref{2.14} hold.
\end{theorem}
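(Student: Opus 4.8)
The plan is to handle both implications by a transfer‑matrix analysis. For $x\notin[-2,2]$ write $x=z+z^{-1}$ with $z=z(x)\in\bbD\setminus\{0\}$ as in \eqref{2.9x}; the one‑step matrices $B_n(x)=\tfrac1{a_n}\left(\begin{smallmatrix}x-b_n&-a_{n-1}\\ a_n&0\end{smallmatrix}\right)$ advancing $\left(\begin{smallmatrix}p_n\\ p_{n-1}\end{smallmatrix}\right)$ converge, when $a_n\to1$, $b_n\to0$, to the free matrix $\left(\begin{smallmatrix}x&-1\\ 1&0\end{smallmatrix}\right)=:B_\infty$, whose eigenvalues $z^{-1},z$ obey $|z|<1<|z^{-1}|$ — an exponential dichotomy with constants governed by $|z(x)|$, hence locally uniform on $\bbC\cup\{\infty\}\setminus[-2,2]$.

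\emph{Forward direction.} Diagonalize $B_\infty=S\Lambda S^{-1}$, $\Lambda=\operatorname{diag}(z^{-1},z)$, and set $\widetilde B_n=S^{-1}B_nS=\Lambda+C_n$, where $C_n=C_n(x)$ is entrywise $O(|a_n-1|+|a_{n-1}-1|+|b_n|)$; by \eqref{2.13} we have $C_n\to0$, the diagonal part of $C_n$ is a fixed linear combination of $a_n-1$, $a_{n-1}-1$, $b_n$ (so it is square‑summable by \eqref{2.13} and has convergent partial sums by \eqref{2.14}), and the off‑diagonal part is square‑summable by \eqref{2.13}. I would then invoke a Levinson/Benzaid--Lutz–type theorem for $v_{n+1}=(\Lambda+C_n)v_n$: under the dichotomy, this splitting of the perturbation into a resonant diagonal piece with convergent sums plus an $\ell^2$ off‑diagonal piece produces a dominant solution $v^{(1)}_n$ with $z^nv^{(1)}_n$ convergent to a nonzero multiple of the $z^{-1}$‑eigenvector. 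Since $\left(\begin{smallmatrix}p_n\\ p_{n-1}\end{smallmatrix}\right)$ is the solution with initial data $\left(\begin{smallmatrix}1\\ 0\end{smallmatrix}\right)$, which in the $S$‑basis is a combination of $v^{(1)}$ and the recessive solution, this gives $z^np_n\to D_\infty(x)$. All estimates depend on $x$ only through $|z(x)|$, so the convergence is locally uniform and $D_\infty$ analytic off $[-2,2]$; the point $\infty$ ($z=0$) is treated directly, since there $z^np_n\equiv(a_1\cdots a_n)^{-1}$ and $\prod a_n$ converges to a finite nonzero limit under \eqref{2.13}+\eqref{2.14} (as $\sum\log a_n=\sum[(a_n-1)-\tfrac12(a_n-1)^2+\cdots]$ then converges). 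A self‑contained alternative replaces the matrix system by the Riccati variable $r_n=z\,p_n/p_{n-1}$, for which $z^np_n=\prod_{k\le n}r_k$ and $\delta_n:=r_n-1$ solves $\delta_n=z^2\delta_{n-1}+O(|a_n-1|+|a_{n-1}-1|+|b_n|)+O(\delta_{n-1}^2)$, giving $\sum(r_n-1)$ convergent.

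\emph{Converse.} Let $f_n=p_n\,z^n$. As $z(x)$ is analytic and zero‑free on $\bbC\cup\{\infty\}\setminus[-2,2]$ and the zeros of $p_n$ are the eigenvalues of the $n\times n$ truncation $J_n$, hence lie in $[-\|J\|,\|J\|]$, each $f_n$ is analytic and zero‑free on $\{|x|\ge R\}\cup\{\infty\}$ whenever $R>\|J\|$. The maximum principle applied to $f_n-f_m$ on this exterior disk shows uniform convergence on one circle $|x|=R_0>2$ propagates to every larger circle, so we may take $R>\|J\|$ and (enlarging $R$) $D_\infty$ zero‑free on $|x|=R$. Then $f_n/f_m\to1$ uniformly on $|x|=R$, so $\log f_n$ (branch pinned by its real value $-\log(a_1\cdots a_n)$ at $\infty$) is uniformly Cauchy there, hence — maximum principle again — uniformly convergent on $\{|x|\ge R\}\cup\{\infty\}$; thus every Taylor coefficient at $\infty$ of $\log f_n$ converges. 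Using $p_n=(a_1\cdots a_n)^{-1}\prod_j(x-\lambda^{(n)}_j)$ and $z(x)=x^{-1}(1+x^{-2}+2x^{-4}+\cdots)$,
\[
\log f_n(x)=-\log(a_1\cdots a_n)-\sum_{m\ge1}\tfrac1m\tr(J_n^m)x^{-m}+n\sum_{m\ge1}\tau_m x^{-2m},\qquad\tau_1=1,
\]
so convergence of the coefficients of $x^0$, $x^{-1}$, $x^{-2}$ yields (i) $\log(a_1\cdots a_n)$ converges, i.e.\ $\prod a_n$ has a finite nonzero limit; (ii) $\sum_{k\le n}b_k$ converges; (iii) $n-\tfrac12\tr(J_n^2)=n-\tfrac12\sum_{k\le n}b_k^2-\sum_{k\le n-1}a_k^2$ converges, which via $a_k^2-1=2(a_k-1)+(a_k-1)^2$ says $\tfrac12\sum b_k^2+\sum(a_k-1)^2+2\sum(a_k-1)$ converges. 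Finally, combining (i) — hence $\sum\log a_k$ converges, with $\log a_k=(a_k-1)-\tfrac12(a_k-1)^2+O((a_k-1)^3)$ and $a_k\to1$ — with (iii) shows $\sum[(a_k-1)^2+b_k^2]$ has eventually nonnegative terms whose partial sums are built from convergent quantities; so \eqref{2.13} holds, and then (iii) forces $\sum(a_k-1)$ to converge, which with (ii) is \eqref{2.14}.

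\emph{Main obstacle.} The substance is the forward direction: \eqref{2.13}+\eqref{2.14} is strictly weaker than $\{a_n-1\},\{b_n\}\in\ell^1$, so classical Levinson does not apply and one must either cite the sharper Benzaid--Lutz/Harris--Lutz form or carry out by hand a preliminary ``rotation'' $Q_n=I+E_n$ with $E_n$ of the size of $C_n$, trading the conditionally‑summable off‑diagonal terms for absolutely summable ones at the price of quadratic‑in‑$C_n$ corrections (summable by Cauchy--Schwarz) — all kept uniform on compacts so $D_\infty$ stays analytic. The converse is essentially maximum principle plus Newton's identities; its one genuinely non‑formal point is using zero‑freeness of $f_n$ off $[-\|J\|,\|J\|]$ to pin $\prod a_n$ to a finite nonzero limit (the degenerate case $D_\infty\equiv0$ being excluded separately).
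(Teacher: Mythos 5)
The paper itself contains no proof of Theorem~\ref{T2.7}: it is a survey statement attributed to Damanik--Simon \cite{Jost1}, so your proposal can only be compared with that source. Damanik--Simon work with Jost solutions and perturbation determinants: sufficiency comes from constructing the Jost solution by a renormalized iteration in which \eqref{2.13} controls the quadratic terms and Abel summation against the conditionally convergent sums \eqref{2.14} controls the linear ones; necessity comes from step-by-step sum rules, i.e., matching Taylor coefficients of the logarithm of the limiting (Jost) function at infinity. Your forward direction is a genuinely different route --- diagonalize the transfer matrix, then a Harris--Lutz/Benzaid--Lutz conditioning $Q_n=I+E_n$ followed by a discrete Levinson theorem --- and it is sound: the dichotomy $|z|<1<|z|^{-1}$ is uniform on compacts of $\bbC\cup\{\infty\}\setminus[-2,2]$, the off-diagonal and quadratic parts of $C_n$ are disposed of by $\ell^2$ plus the dichotomy (with Cauchy--Schwarz making the correction terms $\ell^1$), and the resonant diagonal part is exactly where \eqref{2.14} enters; you correctly identify this rotation as the step that classical Levinson cannot supply. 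Your converse is close in spirit to Damanik--Simon's: the maximum-principle upgrade from one circle to the exterior disk is legitimate because $f_n-f_m$ is analytic there including at $\infty$, and your extraction of \eqref{2.13} and \eqref{2.14} from the $x^0$, $x^{-1}$, $x^{-2}$ coefficients of $\log f_n$ (combining $\log(a_1\cdots a_n)$ with the trace identity to get an eventually nonnegative convergent series) is correct. Two caveats, neither fatal: the forward direction names rather than executes its load-bearing lemma, and the degenerate case $D_\infty\equiv 0$ in the converse is flagged but not excluded --- it is ruled out only if one reads ``\eqref{2.15} holds'' as in Theorem~\ref{T2.6}, where the limit is required to be analytic with zeros only at the $x_n$'s, rather than merely ``the limit exists.''
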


\begin{remarks} 1. This is a result of Damanik--Simon \cite{Jost1}.

\smallskip
2. There exist examples where \eqref{2.13} and \eqref{2.14} hold but both \eqref{2.10} and \eqref{2.11} fail.
\end{remarks}

\begin{theorem} \lb{T2.8} For a Jacobi matrix, $J$, with parameters $\{a_n,b_n\}_{n=1}^\infty$, spectral
measure obeying \eqref{2.9}, and discrete eigenvalues $\{x_n\}_{n=1}^N$, one has
\begin{equation} \lb{2.16}
\sum_{n=1}^\infty (a_n-1)^2 + b_n^2 <\infty
\end{equation}
if and only if
\begin{alignat}{2}
&\text{\rm{(a)}} \qquad && \sigma_\ess (J) = [-2,2] \lb{2.17} \\
&\text{\rm{(b)}} \qquad && \sum_{n=1}^N \dist (x_n, [-2,2])^{3/2} <\infty \lb{2.18} \\
&\text{\rm{(c)}} \qquad && \int_{-2}^2 (4-x^2)^{+1/2} \log [f(x)]\, dx > -\infty \lb{2.19}
\end{alignat}
\end{theorem}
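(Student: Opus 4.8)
This is the theorem of Killip and Simon, and I would prove it through their $P_2$ (``higher-order Szeg\H{o}'') sum rule. Write $x(z)=z+z^{-1}$, a bijection of $\bbD$ onto $\overline{\bbC}\setminus[-2,2]$; for $E$ with $\abs{E}>2$ let $\beta=\beta(E)$, $\abs{\beta}>1$, solve $E=\beta+\beta^{-1}$. Put $G(a)=a^2-1-\log a^2$ and $F(E)=\tfrac14(\beta^2-\beta^{-2})-\log\abs{\beta}$. An elementary check gives $G(a)\ge0$ with $G(a)\asymp(a-1)^2$ as $a\to1$, and $F(E)\ge0$ (from $(\beta^2-1)^2\ge0$) with $F(E)\asymp\dist(E,[-2,2])^{3/2}$ as $E\to\pm2$. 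The plan is to prove, for every bounded Jacobi matrix $J$ with $\sigma_\ess(J)=[-2,2]$, spectral measure $d\mu=f\,dx+d\mu_\s$, and eigenvalues $\{x_n\}_{n=1}^N$ outside $[-2,2]$, the $P_2$ sum rule as an identity in $[0,\infty]$:
\beq \lb{planSR}
Z(J)\;:=\;\frac12\int_{-2}^{2}\log\!\Bigl(\frac{\tfrac{1}{2\pi}\sqrt{4-x^2}}{f(x)}\Bigr)\frac{\sqrt{4-x^2}}{2\pi}\,dx\;+\;\sum_{n=1}^{N}F(x_n)\;=\;\frac14\sum_{n=1}^{\infty}b_n^{2}+\frac12\sum_{n=1}^{\infty}G(a_n).
\eeq
Granting \eqref{planSR}, Theorem~\ref{T2.8} is immediate. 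Since $\int_{-2}^{2}(4-x^2)^{1/2}\log_+f\,dx<\infty$ automatically (as in Remark~2 after Theorem~\ref{T2.4}) and $F(x_n)\asymp\dist(x_n,[-2,2])^{3/2}$, one has, given \eqref{2.17}, that $Z(J)<\infty$ if and only if both \eqref{2.18} and \eqref{2.19} hold; and the right side of \eqref{planSR} is finite if and only if \eqref{2.16} holds, because $\sum G(a_n)<\infty$ forces $a_n\to1$ (so $G(a_n)\asymp(a_n-1)^2$ eventually), while $a_n\to1$ and $b_n\to0$ conversely make $J-J_0$ compact, giving $\sigma_\ess(J)=[-2,2]$ by Weyl's theorem.

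I would obtain \eqref{planSR} as two opposite inequalities. For the first, I would establish \eqref{planSR} as a genuine equality when $J$ is a finite-rank perturbation of $J_0$ (so $a_n=1$, $b_n=0$ for $n$ large) by expanding $\log L(z)$ near $z=0$, where $L(z)=\det\bigl((J-x(z))(J_0-x(z))^{-1}\bigr)$ is the perturbation determinant: its boundary values on $\partial\bbD$ together with its Blaschke factors (the zeros of $L$ in $\bbD$ are precisely the $z(x_n)$) express the Taylor coefficients of $\log L$ at $0$ through $f$ and the $x_n$, while the coefficient-stripping (continued-fraction) expansion of $L$ expresses the same coefficients through the $a_n,b_n$; the $z^0$-coefficient identity is the $C_0$ (Szeg\H{o}) sum rule, and the next ($z^2$-level) one, after subtracting off $C_0$, is \eqref{planSR}. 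Then for general $J$, let $J_m$ be $J$ with its parameters past index $m$ replaced by $1,0$; each $J_m$ is a finite-rank perturbation of $J_0$, so \eqref{planSR} holds for it with right side $\tfrac14\sum_{n=1}^{m}b_n^2+\tfrac12\sum_{n=1}^{m}G(a_n)$, which increases to the right side of \eqref{planSR}. Because $\jap{\delta_1,J_m^k\delta_1}=\jap{\delta_1,J^k\delta_1}$ for $k\le m$ and all supports lie in a fixed compact set, the spectral measures converge weakly; and $Z$ is weakly lower semicontinuous (the integral term is, up to an additive constant, a relative-entropy-type functional of $\mu$ against the fixed semicircle measure $\tfrac{1}{2\pi}\sqrt{4-x^2}\,dx$ and is weakly lsc; the eigenvalue term is lsc since points outside $[-2,2]$ are isolated and masses pass to the limit with the right inequality). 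Hence $\liminf_m Z(J_m)\ge Z(J)$, i.e.\ $Z(J)\le\tfrac14\sum b_n^2+\tfrac12\sum G(a_n)$; feeding in \eqref{2.16} yields \eqref{2.17}--\eqref{2.19}.

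For the reverse inequality I would use coefficient stripping directly. Each $G(a_n),F(x_n)\ge0$, and $Z(J)\ge0$ for every bounded $J$: the integral term is $\ge0$ by Jensen's inequality for $-\log$, since $\tfrac{1}{2\pi}\sqrt{4-x^2}\,dx$ is a probability measure and $\mu_\ac([-2,2])\le1$. The stripping relation $m(z)=\bigl(b_1-x(z)-a_1^2\,m^{(1)}(z)\bigr)^{-1}$ between the $m$-function of $J$ and that of the once-stripped matrix $J^{(1)}$ localizes \eqref{planSR} into $Z(J)=\tfrac14 b_1^2+\tfrac12 G(a_1)+Z(J^{(1)})$; iterating $N$ times and using $Z(J^{(N)})\ge0$ gives $Z(J)\ge\tfrac14\sum_{n=1}^{N}b_n^2+\tfrac12\sum_{n=1}^{N}G(a_n)$ for all $N$, hence $Z(J)\ge\tfrac14\sum b_n^2+\tfrac12\sum G(a_n)$. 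Together with the first inequality this is \eqref{planSR}; and given \eqref{2.17}--\eqref{2.19} (so $Z(J)<\infty$) it forces $\sum G(a_n)<\infty$, hence $a_n\to1$ and then \eqref{2.16}.

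The main obstacle is the computational heart of the first inequality: the clean derivation of \eqref{planSR} for finite-rank perturbations, i.e.\ matching the $z^2$-level Taylor coefficient of $\log L$ on the spectral side against $\tfrac14\sum b_n^2+\tfrac12\sum G(a_n)$ on the coefficient side (with the $C_0$ sum rule needed along the way to extract the right combination). A second delicate point is the weak lower semicontinuity of the eigenvalue term of $Z$, precisely in the regime where the $x_n$ accumulate at the band edges $\pm2$.
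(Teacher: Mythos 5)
The paper gives no proof of Theorem~\ref{T2.8}; it only attributes the result to Killip--Simon \cite{KS}. Your proposal is a correct and faithful outline of exactly that proof---the $P_2$ sum rule with $F$ and $G$ as you define them, proved as an identity in $[0,\infty]$ by combining lower semicontinuity of the entropy/eigenvalue functional under finite-rank truncation (one inequality) with the iterated step-by-step sum rule and positivity of $Z$ (the other)---and the two hard points you flag, the finite-rank Taylor-coefficient computation and the semicontinuity of the eigenvalue term near $\pm 2$, are precisely where \cite{KS} invest their technical effort.
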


\begin{remarks} 1. This theorem is due to Killip--Simon \cite{KS}. They call (a) Blumenthal--Weyl,
(b) Lieb--Thirring, and (c) quasi-Szeg\H{o}.

\smallskip
2. The continuous analog of \eqref{2.16} $\Rightarrow$ \eqref{2.18} is due to Lieb--Thirring \cite{LT2}.
\end{remarks}

\begin{theorem} \lb{T2.8A} Let $J$ be a Jacobi matrix with $\sigma_\ess(J)=[-2,2]$ and spectral
measure, $d\mu$, given by \eqref{2.9}. Suppose $f(x)>0$ for a.e.\ $x$ in $[-2,2]$. Then
\begin{equation} \lb{2.22a}
\lim_{n\to\infty} \abs{a_n-1} + \abs{b_n}=0
\end{equation}
\end{theorem}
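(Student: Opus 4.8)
The plan is to recognize this as (the Jacobi-matrix form of) Rakhmanov's theorem --- proved in this generality by Denisov --- and to establish it by the method of right limits. Since $\sigma_\ess(J)=[-2,2]$, the matrix $J$ is bounded, so $\{(a_n,b_n)\}_{n\geq1}$ lies in a compact subset of $(0,\infty)\times\bbR$; hence along any sequence $n_k\to\infty$ one may, by a diagonal argument, pass to a further subsequence for which $a_{n_k+j}\to a_j^{(r)}$ and $b_{n_k+j}\to b_j^{(r)}$ for every $j\in\bbZ$. The resulting two-sided Jacobi matrix $J_r$ with parameters $\{a_j^{(r)},b_j^{(r)}\}_{j\in\bbZ}$ is a \emph{right limit} of $J$. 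To prove \eqref{2.22a} it suffices to show that the only right limit of $J$ is the free two-sided Jacobi matrix $J_0$: if, say, $a_n\not\to1$, there is a subsequence with $\abs{a_{n_k}-1}\geq\veps$, and any right limit extracted along it has $\abs{a_0^{(r)}-1}\geq\veps$, a contradiction; similarly for $b_n$.

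Two facts about right limits drive the argument. First, $\sigma(J_r)\subseteq\sigma_\ess(J)=[-2,2]$: a finitely supported approximate eigenvector for $J_r$ at a point $\lambda$ can be translated far out along $\{n_k\}$ to produce a Weyl sequence for $J$ at $\lambda$, so $\lambda\in\sigma_\ess(J)$. Second --- and this is the heart of the matter --- every right limit of $J$ is reflectionless on the essential support of the a.c.\ part of $\mu$; since $f>0$ a.e.\ on $[-2,2]$, that essential support is all of $[-2,2]$, so each $J_r$ is reflectionless on $[-2,2]$. This is precisely Remling's theorem on the structure of the a.c.\ spectrum (resting on Kotani theory and the value-distribution results of Breimesser--Pearson). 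In particular $[-2,2]\subseteq\Sigma_\ac(J_r)\subseteq\sigma(J_r)\subseteq[-2,2]$, so $\sigma(J_r)=[-2,2]$ and $J_r$ is reflectionless on its entire spectrum.

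It remains to check that a whole-line Jacobi matrix that is reflectionless on $[-2,2]$ and has spectrum $[-2,2]$ must be $J_0$. Reflectionlessness forces the diagonal Green's function $G_r(n,n;z)$ to be, for each $n$, a Herglotz function, analytic on $\bbC\setminus[-2,2]$, with purely imaginary boundary values a.e.\ on $(-2,2)$ and the asymptotics $G_r(n,n;z)=-z^{-1}+\Oh(z^{-2})$ near infinity; a standard uniqueness argument (multiply by $\sqrt{z^2-4}$, check the product continues analytically across $(-2,2)$ and is bounded, hence constant) shows the only such function is $-(z^2-4)^{-1/2}$, the free Green's function. Recovering $\{a_j^{(r)},b_j^{(r)}\}$ from the collection $\{G_r(n,n;\dott)\}$ via the coefficient-stripping relations then gives $a_j^{(r)}\equiv1$, $b_j^{(r)}\equiv0$. (Equivalently: the reflectionless whole-line Jacobi matrices with spectrum $\fre$ form the isospectral torus of $\fre$, a single point when $\fre=[-2,2]$.) Hence every right limit of $J$ equals $J_0$, and \eqref{2.22a} follows.

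The main obstacle is the reflectionless property of the right limits; everything else is soft --- compactness, translation of Weyl sequences, and a uniqueness statement for a Herglotz function. Establishing that right limits are reflectionless on the a.c.\ spectrum is Remling's deep theorem, and an approach avoiding it would have to reconstruct the older, far more computational proofs of Rakhmanov (for OPUC, via the Christoffel--Darboux kernel) and of Denisov (for OPRL), which is of comparable difficulty.
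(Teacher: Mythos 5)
Your argument is correct, and it is worth saying where it sits relative to the paper. The paper gives no proof of Theorem~\ref{T2.8A} at all --- it is a survey statement, attributed in the accompanying remark to Denisov \cite{DenPAMS}, whose original proof (like Rakhmanov's OPUC antecedent \cite{Rakh77,Rakh83}) is a direct, rather computational argument with Christoffel--Darboux kernels and ratio asymptotics. What you have written is instead the modern right-limits proof via Remling's theorem \cite{Rem}, and the paper does endorse exactly this route, but only later and in greater generality: Theorem~\ref{T4.7A} is the finite gap version, proved by Remling, and your theorem is its $\ell=0$ special case, since the isospectral torus of $[-2,2]$ is the single point $J_0$ (as the paper notes in Section~\ref{s1}) and $d_m(J,\calT_{[-2,2]})\to 0$ is then literally \eqref{2.22a}. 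Your three ingredients --- compactness giving right limits with $\sigma(J_r)\subseteq\sigma_\ess(J)$, Remling's theorem forcing each right limit to be reflectionless on $[-2,2]$, and the uniqueness of the reflectionless whole-line matrix with spectrum $[-2,2]$ via the Herglotz/Liouville argument for $G_r(n,n;z)\sqrt{z^2-4}$ --- are all sound; the only mildly delicate points you gloss over are that reflectionlessness on a set of positive Lebesgue measure also forces that set into the a.c.\ spectrum (Kotani theory, needed for your chain $[-2,2]\subseteq\Sigma_\ac(J_r)\subseteq\sigma(J_r)$), and that the diagonal Green's functions alone do recover the coefficients (true here, and in any case subsumed by the isospectral-torus identification of Section~\ref{s3}). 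The trade-off you name at the end is the right one: your proof is short but rests on the deep theorem of \cite{Rem}, whereas Denisov's is self-contained modulo Rakhmanov.
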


\begin{remark} This is often called the Denisov--Rakhmanov theorem after \cite{Rakh77,Rakh83,DenPAMS}.
The result is due to Denisov. Rakhmanov had the analog for OPUC which implies the weak version of
Theorem~\ref{T2.8A}, where $\sigma_\ess(J)=[-2,2]$ is replaced by $\sigma(J)=[-2,2]$. That the result
as stated was true was a long-standing conjecture settled by Denisov.
\end{remark}

Conditions on the spectrum combined with weak conditions on the Jacobi parameters have strong consequences. For
example, the existence of $\lim_{n\to\infty} a_1\dots a_n$ clearly has no implication for the $b$'s,
but if combined with $\sigma(J)=[-2,2]$ implies, by Theorems~\ref{T2.4} and \ref{T2.8}, that
$\sum_{n=1}^\infty b_n^2 <\infty$. Similarly, one has

\begin{theorem} \lb{T2.9} Suppose $\sigma_\ess(J) = [-2,2]$ and
\begin{equation} \lb{2.23}
\lim_{n\to\infty} (a_1 \dots a_n)^{1/n} = 1
\end{equation}
Then
\begin{equation} \lb{2.24}
\lim_{N\to\infty} \f{1}{N} \sum_{n=1}^N \, (a_n-1)^2 + b_n^2 =0
\end{equation}
\end{theorem}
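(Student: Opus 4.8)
The plan is to compute the density of states measure of $J$ and to read \eqref{2.24} off from its first two moments. Throughout I assume, as is standard in this circle of results, that $J$ is bounded (equivalently, $\sigma(J)$ is bounded), so that $\sup_n(a_n+\abs{b_n})<\infty$; without this the left side of \eqref{2.24} need not be finite. The first step is to observe that \eqref{2.23} forces $\mu$ to be regular in the sense of Stahl--Totik \cite{StT}. Since $\sigma_\ess(J)=[-2,2]$, the spectrum of $J$ outside $[-2,2]$ consists of isolated eigenvalues $\{x_n\}$ accumulating only at $\{\pm2\}$, so $\supp\mu=\sigma(J)=[-2,2]\cup\{x_n\}$ is compact and differs from $[-2,2]$ only by a countable --- hence polar --- set. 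Consequently $\ca(\supp\mu)=\ca([-2,2])=1$, and \eqref{2.23} is precisely the statement $\lim_n(a_1\cdots a_n)^{1/n}=\ca(\supp\mu)$, i.e.\ regularity of $\mu$.

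Next I would identify the density of states of $J$ with the equilibrium measure of $[-2,2]$. For regular $\mu$, the Stahl--Totik criterion gives $\tfrac1N\log\abs{p_N(z)}\to g_\Omega(z,\infty)$ locally uniformly off $\supp\mu$, where $p_N$ is the $N$th orthonormal polynomial and $\Omega=\bbC\cup\{\infty\}\setminus\supp\mu$, while \eqref{2.23} gives $\tfrac1N\log(a_1\cdots a_N)\to\log\ca(\supp\mu)=0$. Writing $p_N$ in terms of its zeros, these two facts say that the logarithmic potentials of the normalized zero counting measures $\nu_N$ of $p_N$ --- equivalently, of the normalized eigenvalue counting measures of the $N\times N$ truncations $J_N$ of $J$ --- converge to that of the equilibrium measure of $\supp\mu$, so by the unicity theorem for logarithmic potentials $\nu_N\to\rho$ weakly-$*$, with $\rho$ that equilibrium measure. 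Since deleting a polar set changes neither capacity nor equilibrium measure, $\rho=\rho_{[-2,2]}=\pi^{-1}(4-x^2)^{-1/2}\,dx$. Because all $\nu_N$ are supported in the fixed compact set $[-\norm{J},\norm{J}]$, weak-$*$ convergence upgrades to convergence of all moments; in particular $\tfrac1N\tr(J_N^2)=\int x^2\,d\nu_N\to\int x^2\,d\rho_{[-2,2]}=2$.

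The remainder is bookkeeping. Put $A_N=\tfrac1N\sum_{n=1}^N a_n^2$, $B_N=\tfrac1N\sum_{n=1}^N b_n^2$ and $S_N=\tfrac1N\sum_{n=1}^N a_n$. A direct count of the diagonal entries of $J_N^2$ gives $\tfrac1N\tr(J_N^2)=2A_N+B_N-2a_N^2/N$, so by the previous step and boundedness of $(a_n)$ one gets $2A_N+B_N\to2$. Since $B_N\ge0$, this forces $\limsup_N A_N\le1$, and Cauchy--Schwarz ($S_N^2\le A_N$) then gives $\limsup_N S_N\le1$; on the other hand AM--GM gives $(a_1\cdots a_N)^{1/N}\le S_N$, so \eqref{2.23} yields $\liminf_N S_N\ge1$, whence $S_N\to1$. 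From $a_n^2\ge2a_n-1$ one has $A_N\ge2S_N-1$, so $\liminf_N A_N\ge1$ and therefore $A_N\to1$; feeding this back, $B_N=(2A_N+B_N)-2A_N\to0$. Finally $\tfrac1N\sum_{n=1}^N(a_n-1)^2=A_N-2S_N+1\to0$, and adding the limits $\tfrac1N\sum_{n=1}^N(a_n-1)^2\to0$ and $B_N\to0$ gives \eqref{2.24}.

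The only genuinely nontrivial ingredient is the middle step: the potential-theoretic fact that regularity of $\mu$ pins down the limiting distribution of the zeros of the $p_N$, and hence the density of states, as $\rho_{[-2,2]}$. The reduction of \eqref{2.23} to regularity (including the harmless presence of the eigenvalues) and the concluding convexity manipulation are routine.
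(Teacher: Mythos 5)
The paper itself contains no proof of Theorem~\ref{T2.9}; it only attributes the result to Simon \cite{S319}. Your argument is correct and is essentially the argument of that reference: \eqref{2.23} reduces to Stahl--Totik regularity because the eigenvalues outside $[-2,2]$ form a countable, hence polar, set; regularity identifies the density of states with the equilibrium measure $\pi^{-1}(4-x^2)^{-1/2}dx$; and \eqref{2.24} then follows from $\tfrac1N\tr(J_N^2)\to2$ together with $\tfrac1N\log(a_1\cdots a_N)\to0$ and elementary convexity (your bookkeeping via $S_N, A_N, B_N$ is a correct variant of the usual one-line inequality $a^2-1-2\log a\geq c\,(a-1)^2$ on compacts of $(0,\infty)$). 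Two harmless imprecisions: the locally uniform convergence $\tfrac1N\log\abs{p_N(z)}\to g_\Omega(z,\infty)$ is guaranteed only off the \emph{convex hull} of $\supp\mu$ (in the gaps one merely has convergence in capacity), but that is all your unicity argument uses; and the unicity step itself needs the observation that both limiting measures are supported on a compact subset of $\bbR$, so that equality of the potentials off the convex hull extends by harmonicity to the whole (connected) complement of the supports and then forces equality of the measures --- equality of potentials near infinity alone would not suffice for general planar measures.
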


\begin{remarks} 1. \eqref{2.23} says that the underlying measure is regular in the sense of Ullman--Stahl--Totik;
see the discussion in Section~\ref{s3}.

\smallskip
2. This theorem is a result of Simon \cite{S319}.
\end{remarks}

\section{The Isospectral Torus} \lb{s3}

Let $\fre$ be a finite gap set with $\ell$ gaps and $\ell+1$ components, $\fre_j = [\alpha_j,\beta_j]$, $j=1,\ldots,\ell+1$. There
is associated to $\fre$ a natural $\ell$-dimensional torus, $\calT_\fre$, of almost periodic Jacobi matrices.
If $\{a_n,b_n\}_{n=-\infty}^\infty$ are almost periodic sequences, they are determined by their
values for $n\geq 1$ so we can view the elements of $\calT_\fre$ as either one- or two-sided Jacobi matrices.
There are at least three different ways to think of $\calT_\fre$:
\begin{SL}
\item[(a)] As reflectionless two-sided Jacobi matrices, $J$, with $\sigma(J)=\fre$. This is the approach of
\cite{Batch,Bulla,GesHol,Geszt,PY,Rice,SY,Teschl}.

\item[(b)] As one-sided Jacobi matrices whose $m$-functions are minimal Herglotz functions on the Riemann
surface of $\big[\prod_{j=1}^{\ell+1} (z-\alpha_j) (z-\beta_j)\big]^{1/2}$. This is the approach of \cite{CSZ1}.

\item[(c)] As two-sided almost periodic $J$ which are regular in the sense of Stahl--Totik \cite{StT} with
$\sigma(J)=\fre$. This is the approach of \cite{KrS}.
\end{SL}

\smallskip
In understanding these notions, some elementary aspects of potential theory are relevant, so we begin by
discussing them. For discussion of potential theory ideas in spectral theory, see Stahl--Totik \cite{StT}
or Simon \cite{S317}.

On our finite gap set, $\fre$, there is a unique probability measure, $\rho_\fre$, called the equilibrium
measure which minimizes
\begin{equation} \lb{3.1}
\calE(\rho) = \int \log \abs{x-y}^{-1}\, d\rho(x) d\rho(y)
\end{equation}
among all probability measures supported on $\fre$. The corresponding equilibrium potential is
\begin{equation} \lb{3.2}
\Phi_{\rho_\fre}(x) = \int \log \abs{x-y}^{-1}\, d\rho_\fre(x)
\end{equation}
The capacity, $C(\fre)$, is defined by
\begin{equation} \lb{3.3}
C(\fre) = \exp (-\calE(\rho_\fre))
\end{equation}

A Jacobi matrix with $\sigma_\ess(J) =\fre$ has
\begin{equation} \lb{3.4}
\limsup (a_1 \dots a_n)^{1/n} \leq C(\fre)
\end{equation}
$J$ is called regular if one has equality in \eqref{3.4}. We call a two-sided Jacobi matrix regular if each of
the (one-sided) Jacobi matrices
\begin{equation} \lb{3.5}
J_+ \text{ (resp.\ $J_-$) with parameters } \{a_n,b_n\}_{n=1}^\infty \text{ (resp. $\{a_{-n},b_{-n+1}\}_{n=1}^\infty)$}
\end{equation}
is regular. $\rho_\fre$ is the density of zeros for any regular $J$ with $\sigma_\ess(J) =\fre$.

The $\ell+1$ numbers $\rho_\fre ([\alpha_j,\beta_j])$, $j=1, \dots, \ell+1$, which sum to $1$ are called
the harmonic measures of the bands. We also recall that a bounded function, $\psi$, on $\bbZ$ is called almost
periodic if $\{S^k\psi\}_{k\in\bbZ}$, where $(S^k\psi)_n=\psi_{n-k}$, has compact closure in $\ell^\infty$ (see the
appendix to Section~5.13 in \cite{Rice} for more on this class). Such $\psi$'s are associated to a continuous
function, $\Psi$, on a torus of finite or countably infinite dimension so that
\begin{equation} \lb{3.6}
\psi_n = \Psi(e^{2\pi in\omega_1},e^{2\pi in\omega_2},\dots)
\end{equation}
The set of $\{n_0 + \sum_{k=1}^K n_k \omega_k \colon n_0, n_k\in\bbZ, \; \sum_{k=1}^K |n_k|<\infty\}$ is called the frequency module of $\psi$
when there is no proper submodule (over $\bbZ$) that includes all the nonvanishing Bohr--Fourier coefficients.
This set for arbitrary $\{\omega_k\}_{k=1}^K$ is called the frequency module generated by $\{\omega_k\}_{k=1}^K$.

With $J_\pm$ given by \eqref{3.5}, we define $m_\pm (z)$ for $z\in\bbC\setminus\bbR$ by
\begin{equation} \lb{3.5a}
m_\pm (z) = \jap{\delta_1, (J_\pm -z)^{-1}\delta_1}
\end{equation}
One has for a two-sided Jacobi matrix that
\begin{equation} \lb{3.5b}
\jap{\delta_0, (J-z)^{-1}\delta_0} = -(a_0^2 m_+(z) - m_-(z)^{-1})^{-1}
\end{equation}
An important fact is that $J_\pm$ are determined by $m_\pm$, essentially because $m_\pm$ determine the spectral measures $\mu_\pm$ via their Herglotz representations,
\begin{equation} \lb{3.5c}
m_\pm(z) = \int \f{d\mu_\pm(x)}{x-z}
\end{equation}
and $\mu_\pm$ determine the $a$'s and $b$'s via recursion coefficients for OPRL. Alternatively, the Jacobi parameters
can be read off a continued fraction expansion of $m_\pm(z)$ at $z=\infty$.

It is sometimes useful to let $\wti J_-$ have parameters $\{a_{-n-1}, b_{-n}\}_{n=1}^\infty$, in which
case
\begin{equation} \lb{3.5d}
\jap{\delta_0, (J-z)^{-1}\delta_0} = -(z-b_0 + a_0^2 m_+(z) + a_{-1}^2 \ti m_-(z))^{-1}
\end{equation}

We can now turn to the descriptions of the isospectral torus. A two-sided Jacobi matrix, $J$, is called
reflectionless on $\fre$ if for a.e.\ $\lambda\in\fre$ and all $n$,
\begin{equation} \lb{3.7}
\Real \jap{\delta_n, (J- (\lambda + i0))^{-1} \delta_n} =0
\end{equation}
($g(\lambda + i0)$ means $\lim_{\veps\downarrow 0}g(\lambda + i\veps)$).
It is known that this is equivalent to
\begin{equation} \lb{3.8}
a_0^2 m_+ (\lambda + i0) \, \ol{m_- (\lambda + i0)} =1 \text{ for a.e. } \lambda\in\fre
\end{equation}

\subsection*{First Definition of the Isospectral Torus} A two-sided Jacobi matrix, $J$, is said to lie in
the isospectral torus, $\calT_\fre$, for $\fre$ if $\sigma(J)=\fre$ and $J$ is reflectionless on $\fre$.

\smallskip

$G_{00}(z)=\jap{\delta_0, (J-z)^{-1} \delta_0}$ is determined by $\Ima\log(G_{00}(x+i0))$ via an exponential Herglotz
representation. This argument is $\pi/2$ on $\fre$, $0$ on $(-\infty,\alpha_1)$, and $\pi$ on
$(\beta_{\ell+1},\infty)$. $G_{00}$ is real in each gap and monotone, so $G_{00}$ has at most one zero and that zero determines $\Ima \log(G_{00}(x+i0))$ on that gap.
If $G_{00}>0$ on $(\beta_j,\alpha_{j+1})$ we'll say the zero is at $\beta_j$ and if $G_{00} < 0$ on $(\beta_j,\alpha_{j+1})$ the zero is at $\alpha_{j+1}$. Thus, the zeros of $G_{00}$ determine $G_{00}$ and so $\Ima G_{00}(\lambda +i0)$ on $\fre$.

By \eqref{3.5d}, $G_{00}$ has a zero at $\lambda_0$ if and only if $m_+$ or $\ti m_-$ has a pole at
$\lambda_0$, and one can show that $m_+$ and $\ti m_-$ have no common poles. The residue of the pole is determined by
the derivative of $G_{00}$ at $\lambda=\lambda_0$. The reflectionless condition determines $\Ima m_+$
and $\Ima \ti m_-$ on $\fre$, so $a_0,a_{-1},b_0, m_+, \ti m_-$, and thus $J$, are uniquely determined
by knowing the position of the zero and if they are in the gaps (as opposed to the edges) whether the
poles are in $m_+$ or $\ti m_-$. Hence, for each gap, we have the two copies of $(\beta_j,\alpha_{j+1})$
glued at the ends, that is, a circle. Thus, given that one can show each possibility occurs, $\calT_\fre$
is a product of $\ell$ circles, that is, a torus. It is not hard to show that the Jacobi parameters depend
continuously on the positions of the zeros of $G_{00}$ and $m_+/\ti m_-$ data.

\smallskip
We turn to the second approach. Any $G_{00}$ as above is purely imaginary on the bands which implies, by
the reflection principle, that it can be meromorphically continued to a matching copy of $\calS_+\equiv
\bbC\cup\{\infty\}\setminus\fre$. This suggests meromorphic functions on $\calS$, two copies of $\calS_+$
glued together along $\fre$, will be important. $\calS$ is precisely the compactified Riemann surface of $\sqrt{R(z)}$,
where
\begin{equation} \lb{3.9}
R(z) = \prod_{j=1}^{\ell+1} (z-\alpha_j) (z-\beta_j)
\end{equation}
$\calS$ is a Riemann surface of genus $\ell$. Meromorphic functions on the surface that are not functions
symmetric under interchange of the sheets (i.e., meromorphic on $\bbC)$ have degree at least $\ell+1$.

By a minimal meromorphic Herglotz function, we mean a meromorphic function of degree $\ell+1$ on $\calS$ that obeys
\begin{SL}
\item[(i)] $\Ima f>0$ on $\calS_+\cap\bbC_+$ ($\bbC_+ = \{z\colon \Ima z>0\}$)
\item[(ii)] $f$ has a zero at $\infty$ on $\calS_+$ and a pole at $\infty$ on $\calS_-$.
\end{SL}

\smallskip
Such functions must have their $\ell$ other poles on $\bbR$ in the gaps on one sheet or the other and
are uniquely, up to a constant, determined by these $\ell$ poles, one per gap. Each ``gap,''
when you include the two sheets and branch points at the gap edges, is a circle. So if we normalize by
$m(z) =-z^{-1} + O(z^{-2})$ near $\infty$ on $\calS_+$, the set of such minimal normalized Herglotz functions
is an $\ell$-dimensional torus. Each such Herglotz function can be written on $\calS_+\cap\bbC_+$ as
\begin{equation} \lb{3.10}
m(z) = \int \f{d\mu(x)}{x-z}
\end{equation}
where $\mu$ is supported on $\fre$ plus the poles of $m$ in the gaps on $\calS_+$. $\mu$ then
determines a Jacobi matrix.

\subsection*{Second Definition of the Isospectral Torus}The isospectral torus, $\calT_\fre$, is the
set of one-sided $J$'s whose $m$-functions are minimal Herglotz functions on the compact Riemann surface $\calS$ of $\sqrt{R}$ given by \eqref{3.9}.

\smallskip

The relation between the two definitions is that the restrictions of the two-sided $J$'s to the one-sided are
these $J$ given by minimal Herglotz functions. In the other direction, each $J$ is almost periodic
and so has a unique almost periodic two-sided extension.

\subsection*{Third Definition of the Isospectral Torus} The isospectral torus is the almost periodic two-sided
$J$'s with $\sigma(J)=\fre$ and which are regular.

\smallskip

This is equivalent to the reflectionless definition since
regularity implies the Lyapunov exponent is zero and then Kotani theory \cite{Kot,S168} implies $J$ is reflectionless.

\medskip
As noted, the $J$'s in the isospectral torus are all almost periodic. Their frequency module is generated by
the harmonic measures of the bands. In particular, the elements of the isospectral torus are periodic if and only if all harmonic measures are rational. Their spectra are purely a.c.\ and all solutions of $Ju=\lambda u$ are bounded for any $\lambda\in\fre^\intt$.

Szeg\H{o} asymptotics is more complicated than in the $\ell=0$ case. One has for the OPRL associated to a point
in the isospectral torus (thought of as a one-sided Jacobi matrix) that for all $z\in\bbC\setminus\sigma(J)$,
\begin{equation} \lb{3.11}
p_n(z)\exp (-n\Phi_{\rho_\fre}(z))
\end{equation}
is asymptotically almost periodic as a function of $n$ with magnitude bounded away from $0$ for all $n$. The
frequency module is $z$-dependent (as written, this is even true if $\ell=0$ as can bee seen from the free case): the frequencies come from
the harmonic measures of the bands plus one that comes from the conjugate harmonic function of $\Phi_{\rho_\fre}
(z)$ in $\bbC_+$ (which gives the $z$-dependence of the frequency module). The limit of \eqref{3.11} on $\fre$,
where $\Phi_{\rho_\fre}(x)=0$, yields the boundedness of solutions of $(J-\lambda) u=0$. There is also a limit
at $z=\infty$: $a_1\dots a_n/C(\fre)^n$ which is almost periodic.

\section{Results in the Finite Gap Case} \lb{s4}

As we've seen, if $\ti J$ is in the isospectral torus for $\fre$ and $\lambda\in\fre^\intt$, then all solutions
of $\ti Ju = \lambda u$ are bounded. This remains true under $\ell^1$ perturbations by a variation of parameters,
so Theorem~\ref{T2.1} is applicable and we have

\begin{theorem}\lb{T4.1} Let $\fre$ be a finite gap set and $\ti J$, with parameters $\{\ti a_n,\ti b_n\}_{n=1}^\infty$, an element of $\calT_\fre$, the isospectral torus for $\fre$. Let $J$ be a Jacobi
matrix with
\begin{equation} \lb{4.1}
\sum_{n=1}^\infty\, \abs{a_n - \ti a_n} + \abs{b_n -\ti b_n} <\infty
\end{equation}
Then $\sigma_\ess(J) = \fre$ and the spectrum on $\fre^\intt$ is purely a.c.
\end{theorem}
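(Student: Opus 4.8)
The plan is to treat the two conclusions separately by perturbing off the fixed point $\ti J\in\calT_\fre$: the assertion $\sigma_\ess(J)=\fre$ is a compactness statement, while purity of the a.c.\ spectrum on $\fre^\intt$ will follow from Theorem~\ref{T2.1} once one knows that all solutions of $Ju=\lambda u$ are bounded for $\lambda\in\fre^\intt$. For the essential spectrum, note that $J-\ti J$ is the tridiagonal operator with diagonal entries $b_n-\ti b_n$ and off-diagonal entries $a_n-\ti a_n$; by \eqref{4.1} these are summable, hence tend to $0$, so $J-\ti J$ is a norm limit of finite-rank truncations and therefore compact. By Weyl's theorem on the stability of the essential spectrum, $\sigma_\ess(J)=\sigma_\ess(\ti J)$, and since $\ti J\in\calT_\fre$ its spectrum is $\fre$ together with finitely many isolated eigenvalues in the gaps (at most one per gap, from the poles in the gaps of the minimal Herglotz function of Section~\ref{s3}); hence $\sigma_\ess(\ti J)=\fre$ and so $\sigma_\ess(J)=\fre$.

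For the boundedness of solutions, recall that the parameters of an element of $\calT_\fre$ are bounded with $\ti a_n$ bounded away from $0$ (a standard feature of the isospectral torus); since $a_n-\ti a_n$ and $b_n-\ti b_n$ tend to $0$ by \eqref{4.1}, the same bounds hold for $a_n,b_n$, so the coefficient hypothesis of Theorem~\ref{T2.1} holds for $J$. Fix $\lambda\in\fre^\intt$; by the discussion at the end of Section~\ref{s3} every solution of $\ti Ju=\lambda u$ is bounded. Passing to the $2\times2$ one-step transfer matrices $\ti A_n(\lambda)$ of $\ti J$, boundedness of all solutions together with the Wronskian identity and the two-sided bound on $\ti a_n$ forces the partial products $\ti A_n(\lambda)\cdots\ti A_1(\lambda)$, and their tails $\ti A_n(\lambda)\cdots\ti A_{k+1}(\lambda)$, to be bounded uniformly in $n$ and $k$. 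The transfer matrices $A_n(\lambda)$ of $J$ satisfy $A_n(\lambda)=\ti A_n(\lambda)+R_n(\lambda)$ with $\norm{R_n(\lambda)}\le C_\lambda\bigl(\abs{a_n-\ti a_n}+\abs{b_n-\ti b_n}+\abs{a_{n-1}-\ti a_{n-1}}\bigr)$, hence $\sum_n\norm{R_n(\lambda)}<\infty$ by \eqref{4.1}. Iterating $A_n\cdots A_1-\ti A_n\cdots\ti A_1=\ti A_n\bigl(A_{n-1}\cdots A_1-\ti A_{n-1}\cdots\ti A_1\bigr)+R_n\,A_{n-1}\cdots A_1$ yields the discrete Duhamel formula
\[
A_n\cdots A_1=\ti A_n\cdots\ti A_1+\sum_{k=1}^n\ti A_n\cdots\ti A_{k+1}\,R_k\,A_{k-1}\cdots A_1,
\]
and a Gronwall estimate bounds $\norm{A_n(\lambda)\cdots A_1(\lambda)}$ uniformly in $n$; equivalently, every solution of $Ju=\lambda u$ is bounded.

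Finally, Theorem~\ref{T2.1} applied with $S=\fre^\intt$ gives $\mu_\s(\fre^\intt)=0$ and $\mu_\ac(T)>0$ for every $T\subset\fre^\intt$ with $\abs{T}>0$, which is precisely the claim that the spectrum of $J$ on $\fre^\intt$ is purely a.c. The step I expect to need genuine care is the variation-of-parameters estimate above---the exact analog of the one behind Theorem~\ref{T2.2} when $\fre=[-2,2]$: one must confirm that $a_n$ stays bounded away from $0$ (so the transfer matrices are defined and $R_n$ is controlled), that the bound on $R_n$ is honestly summable, and that the unperturbed partial products and their tails are uniformly bounded, which uses both boundedness of all solutions of $\ti Ju=\lambda u$ and the Wronskian normalization of the transfer matrices. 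Since Theorem~\ref{T2.1} requires no uniformity in $\lambda$, a pointwise Gronwall argument suffices and the remaining bookkeeping is routine.
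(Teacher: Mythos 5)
Your proposal is correct and follows essentially the same route the paper indicates: the paper derives Theorem~\ref{T4.1} by noting that boundedness of all solutions of $\ti Ju=\lambda u$ for $\lambda\in\fre^\intt$ persists under $\ell^1$ perturbations via variation of parameters, and then invoking Theorem~\ref{T2.1} (with the essential-spectrum claim being the standard Weyl/compactness observation you spell out). Your transfer-matrix Duhamel--Gronwall argument, including the use of the determinant identity and the two-sided bounds on $\ti a_n$ to control the tails $\ti A_n\cdots\ti A_{k+1}$, is exactly the ``variation of parameters'' the paper leaves implicit.
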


\begin{remark} We are not aware of this appearing explicitly in the literature, although it follows easily from
results in \cite{PY,CSZ1}.
\end{remark}

As for eigenvalues in $\bbR\setminus\fre$:

\begin{theorem} \lb{T4.2} There is a constant $C$ depending only on $\fre$ so that for any Jacobi matrix, $J$,
obeying \eqref{4.1} for some $\ti J\in\calT_\fre$, we have, with $\{x_n\}_{n=1}^N$ the eigenvalues of $J$,
\begin{equation} \lb{4.2}
\sum_{n=1}^N \dist (x_n,\fre)^{1/2} \leq C_0 + C\biggl(\, \sum_{n=1}^\infty\, \abs{a_n - \ti a_n} +
\abs{b_n - \ti b_n}\biggr)
\end{equation}
where
\begin{equation} \lb{4.3}
C_0 = \sum_{j=1}^\ell\, \bigg|\frac{\alpha_{j+1} - \beta_j}{2}\bigg|^{1/2}
\end{equation}
\end{theorem}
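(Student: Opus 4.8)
The plan is to dispose of at most one eigenvalue per gap ``for free'' and then control everything else by a Birman--Schwinger argument relative to $\ti J$. If $x_n$ lies in the gap $(\beta_j,\alpha_{j+1})$ then $\dist(x_n,\fre)\le\tfrac12(\alpha_{j+1}-\beta_j)$, so singling out one eigenvalue in each gap and discarding its contribution accounts precisely for the constant $C_0$ in \eqref{4.3}; I would choose this reserved eigenvalue so as also to absorb the (at most one) Dirichlet eigenvalue of the one-sided $\ti J$ in that gap, which is where the resolvent of $\ti J$ is singular. It then remains to bound ${\sum}'\dist(x_n,\fre)^{1/2}$ over the eigenvalues of $J$ in $\bbR\setminus[\alpha_1,\beta_{\ell+1}]$ together with all but the reserved eigenvalue in each gap.

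Next write $V=J-\ti J$, a tridiagonal matrix whose $n$-th row is bounded by $v_n:=|a_{n-1}-\ti a_{n-1}|+|b_n-\ti b_n|+|a_n-\ti a_n|$, so that $V$ is trace class with $\|V\|_1\le\sum_n v_n\le 2\sum_n(|a_n-\ti a_n|+|b_n-\ti b_n|)$. Factor $V=A_2^{*}A_1$ with $\|A_1\delta_n\|,\|A_2\delta_n\|\lesssim v_n^{1/2}$; by the Birman--Schwinger principle an eigenvalue $x_0\in\bbR\setminus\sigma(\ti J)$ of $J$ forces $1$ to be an eigenvalue of $K(x_0):=A_1(\ti J-x_0)^{-1}A_2^{*}$. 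The input about $\ti J$ that drives the estimate is a resolvent bound, uniform over the compact torus $\calT_\fre$: for $x_0$ in a gap (away from the reserved point) or outside the convex hull of $\fre$,
\[
\bigl|\jap{\delta_i,(\ti J-x_0)^{-1}\delta_j}\bigr|\le\frac{C_1}{\kappa(x_0)}\,e^{-\kappa(x_0)|i-j|},
\]
where $\kappa(x_0)$ is comparable to the Green's function $G_\fre(x_0)$ of $\fre$. The crucial feature is that $G_\fre(x_0)\asymp\dist(x_0,\fre)^{1/2}$ as $x_0$ approaches a band edge $\alpha_j$ or $\beta_j$ from $\bbR\setminus\fre$ --- the square root coming from the branch points of the Riemann surface $\calS$ of $\sqrt R$ of \eqref{3.9}, just as \eqref{2.9x} has a square-root singularity at $\pm2$ when $\ell=0$ --- while $\kappa(x_0)$ stays bounded below away from the edges. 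That the resolvent kernel decays at the rate $G_\fre$ is the statement that the Lyapunov exponent of a reflectionless regular $\ti J$ equals $G_\fre$ (Thouless formula); the prefactor, which also sees $\dist(x_0,\fre)^{-1}$ and the residue of the one pole of $m_\pm$ in the gap, is uniformly controlled because the matrix elements of $(\ti J-z)^{-1}$ depend continuously on the point of $\calT_\fre$.

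Converting this into the $\tfrac12$-power sum is where the work lies. One splits each gap at its midpoint so that $(\ti J-x_0)^{-1}$ is monotone in $x_0$ on each half (the spectrum of $\ti J$ lying entirely to one side), which is what legitimizes Birman--Schwinger counting there; but since $\tfrac12$ is the \emph{critical} exponent --- below it \eqref{4.2} fails, as the remarks after Theorem~\ref{T2.3} recall --- the naive Birman--Schwinger plus layer-cake argument is not enough. One must instead run the refined telescoping/induction on the number of eigenvalues of Hundertmark--Simon \cite{HS} (following \cite{HLT}), adapted to the gap geometry, which is precisely what extracts the power $\tfrac12$ from the rate $\kappa(x_0)\asymp\dist(x_0,\fre)^{1/2}$ and yields ${\sum}'\dist(x_n,\fre)^{1/2}\le C\sum_n(|a_n-\ti a_n|+|b_n-\ti b_n|)$ with $C$ depending only on $\fre$.

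The main obstacle will be twofold: proving the sharp square-root decay rate for the resolvent of a general element of $\calT_\fre$ near band edges, uniformly over the torus, and then pushing it through the critical Lieb--Thirring machinery while keeping the dependence on the perturbation sum linear and the remaining dependence on $\fre$ alone. As a sanity check, when all harmonic measures are rational one may instead compose with the discriminant polynomial $\Delta$ satisfying $\Delta^{-1}([-2,2])=\fre$: since $\Delta$ is fixed, $\Delta(J)-\Delta(\ti J)$ is trace class, the eigenvalues of $J$ in gaps map to eigenvalues of the banded matrix $\Delta(J)$ outside $[-2,2]$ with $\dist(\Delta(x_n),[-2,2])^{1/2}\asymp_\fre\dist(x_n,\fre)^{1/2}$, and one invokes a block analog of Theorem~\ref{T2.3}; but the passage from this special case to general $\fre$ by approximation is itself delicate, which is why the resolvent route is the one I would pursue.
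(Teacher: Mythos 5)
First, note that the paper itself offers no proof of Theorem~\ref{T4.2}: it is a survey statement whose entire proof content is the remark that the bound is ``essentially in'' Frank--Simon \cite{FS}, who treat trace-class perturbations of \emph{two-sided} elements of $\calT_\fre$ (which have no eigenvalues), together with the observation that the one-sided statement then follows by interlacing. Measured against that, your core strategy --- Birman--Schwinger against $\ti J$, exponential decay of $\jap{\delta_i,(\ti J-x_0)^{-1}\delta_j}$ at the rate of the Green's function $G_\fre$, the fact that $G_\fre(x_0)\asymp \dist(x_0,\fre)^{1/2}$ near band edges, and the Hundertmark--Lieb--Thomas/Hundertmark--Simon iteration to reach the critical power --- is exactly the engine of \cite{FS}, and your discriminant remark for rational harmonic measures correctly identifies the Damanik--Killip--Simon alternative and its limitation.

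The genuine gap is in how you treat the one-sided $\ti J$. A one-sided element of the isospectral torus generically \emph{does} have an eigenvalue (the Dirichlet datum) in each gap, so two of your key assertions fail there: $(\ti J-x_0)^{-1}$ is not monotone on a half-gap whose interior contains spectrum of $\ti J$, and a resolvent bound whose prefactor is ``uniformly controlled by continuity over $\calT_\fre$'' is false near that pole --- continuity over the compact torus does not tame a singularity that each torus element individually possesses. Reserving one eigenvalue of $J$ per gap does not repair this: the discarded eigenvalue buys you the additive constant $C_0$, but the remaining eigenvalues of $J$ in that gap may sit arbitrarily close to the pole of $(\ti J-\cdot)^{-1}$, where your Birman--Schwinger kernel is unbounded, so the estimate for \emph{them} is what breaks. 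The route the paper points to avoids this entirely: prove the bound, with no $C_0$, for the two-sided $\ti J\in\calT_\fre$, which is reflectionless with $\sigma(\ti J)=\fre$ purely a.c.\ and hence has resolvent controlled throughout each gap, degenerating only at the band edges at the rate $\dist(\cdot,\fre)^{-1/2}$; then pass to the half-line operator by decoupling at the origin, an operation that changes the eigenvalue count in each gap by at most one and therefore costs exactly the $C_0$ of \eqref{4.3}. Your $C_0$ bookkeeping (one eigenvalue per gap, each at distance at most half the gap length from $\fre$) is the right arithmetic, but it must be attached to the interlacing step, not used as a patch for a singular Birman--Schwinger kernel.
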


\begin{remarks} 1. This result is essentially in Frank--Simon \cite{FS}. They are only explicit about
perturbations of two-sided Jacobi matrices where $\ti J$ has no eigenvalues. They mention that one can use interlacing
to then get results for the one-sided case---this makes that idea explicit.

\smallskip
2. Prior to \cite{FS}, Frank--Simon--Weidl \cite{FSW} proved such a bound on the $x_n$ in $\bbR\setminus
[\alpha_1,\beta_{\ell+1}]$ and Hundertmark--Simon \cite{HS08} if $1/2$ in the power of $\dist(\ldots)^{1/2}$ is replaced by
$p>1/2$ and $1$ in the power of $\abs{a_n-\ti a_n}$ and $\abs{b_n-\ti b_n}$ by $p+1/2$, that is,
noncritical Lieb--Thirring bounds.
\end{remarks}

\begin{theorem} \lb{T4.3} Let $J$ be a Jacobi matrix with $\sigma_\ess (J)=\fre$ and Jacobi
parameters $\{a_n,b_n\}_{n=1}^\infty$. Suppose its spectral measure has the form
\begin{equation} \lb{4.4}
d\mu = f(x)\, dx + d\mu_\s
\end{equation}
where $d\mu_\s$ is singular with respect to $dx$. Suppose $\{x_n\}_{n=1}^N$ are the pure points of $d\mu$
outside $\fre$. Consider the three conditions:
\begin{alignat}{2}
&\text{\rm{(a)}} \qquad && \sum_{n=1}^N \dist (x_n,\fre)^{1/2} < \infty \lb{4.5} \\
&\text{\rm{(b)}} \qquad && \int_\fre \dist (x,\bbR\setminus\fre)^{-1/2}\log [f(x)]\,dx > -\infty \lb{4.6} \\
&\text{\rm{(c)}} \qquad && \text{For some constant $R>1$, }
R^{-1} \leq \f{a_1 \dots a_n}{C(\fre)^n} \leq R \lb{4.7}
\end{alignat}
Then any two imply the third, and if they hold, there exists $\ti J\in
\calT_\fre$ so that
\begin{equation} \lb{4.8}
\lim_{n\to\infty} \abs{a_n - \ti a_n} + \abs{b_n -\ti b_n} =0
\end{equation}
Moreover,
\begin{alignat}{2}
&\text{\rm{(d)}} \qquad && \lim_{n\to\infty} \f{a_1 \dots a_n}{\ti a_1 \dots \ti a_n} \text{ exists in }
(0,\infty) \lb{4.9} \\
&\text{\rm{(e)}} \qquad && \lim_{K\to\infty} \sum_{n=1}^K \, (b_n - \ti b_n) \text{ exists in }
\bbR \lb{4.10}
\end{alignat}
\end{theorem}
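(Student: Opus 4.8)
The plan is to transfer Theorem~\ref{T2.4} (the $\ell=0$ Szeg\H{o} theorem) to the finite gap setting via the polynomial map / covering-space technology that reduces a finite gap set with rational harmonic measures to an interval, and then to handle general $\fre$ by approximation. Concretely: if each band $\fre_j$ has rational harmonic measure, there is a polynomial $\Delta$ of degree $p$ with $\Delta^{-1}([-2,2]) = \fre$, and pulling $J$ back through $\Delta$ produces a Jacobi-type (more precisely, $p$-periodic-plus-perturbation block) problem on $[-2,2]$. One checks that the three conditions (a), (b), (c) for $J$ on $\fre$ correspond, under this pullback, to the analogous conditions for the pulled-back problem on $[-2,2]$, because $\dist(x,\bbR\setminus\fre)^{-1/2}\,dx$ is (up to bounded factors) the pullback of $(4-y^2)^{-1/2}\,dy$, the capacity $C(\fre)$ relates to $C([-2,2])=1$ through $\deg\Delta$, and eigenvalues of $J$ in gaps of $\fre$ map to eigenvalues of the pullback in $(-2,2)^c$ with comparable distances. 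Then one applies the known scalar result and pushes the conclusion back down. For the step (d)/(e) one tracks the product $a_1\cdots a_n$ and the partial sums of $b_n$ through the same correspondence, comparing against the reference point $\ti J\in\calT_\fre$ that the pullback selects.

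The first concrete step is to recall and set up the covering map: work with $\calS_+ = \bbC\cup\{\infty\}\setminus\fre$ realized as the quotient of $\bbD$ by a Fuchsian group $\Gamma$ (the Peherstorfer--Sodin--Yuditskii approach sketched in Section~\ref{s5}), with the character-automorphic machinery $B(z)$, $\calM$-functions, and the Blaschke-type products attached to $\Gamma$. In this language, condition (a), $\sum \dist(x_n,\fre)^{1/2}<\infty$, is exactly the statement that the pulled-back eigenvalues satisfy a Blaschke condition in $\bbD$ (using the square-root singularity of $z(x)$ at band edges, exactly as in Remark~6 following Theorem~\ref{T2.3}); condition (b) is the Szeg\H{o} integrability of $\log f$ against harmonic measure (compare \eqref{1.5}); and condition (c) is boundedness above and below of $a_1\cdots a_n / C(\fre)^n$, i.e. that the associated Szeg\H{o}-type function does not degenerate. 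Step two: prove ``any two imply the third'' by the step-by-step sum rule / entropy argument of Killip--Simon as adapted to the finite gap case — this is where one invokes the higher-order Szeg\H{o} / sum-rule identities for finite gap Jacobi matrices (the $Z$-function and the relative Szeg\H{o} function of Christiansen--Simon--Zinchenko), which relate the left side of (a), the integral in (b), and the limit controlling (c) by an identity of the form ``$\text{eigenvalue sum} + \text{(relative) entropy} + \text{boundary term} = 0$''. Step three: once (a)+(b)+(c) hold, extract the limit point $\ti J\in\calT_\fre$. The natural candidate is obtained from the boundary values of the Szeg\H{o}/Jost function restricted to $\fre$: these boundary values determine a minimal Herglotz $m$-function on $\calS$ (second definition of $\calT_\fre$), hence a point of the torus, and the Jost asymptotics force $|a_n-\ti a_n|+|b_n-\ti b_n|\to 0$. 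Step four: deduce (d) and (e) by writing $a_1\cdots a_n/(\ti a_1\cdots\ti a_n)$ and $\sum_{1}^{K}(b_n-\ti b_n)$ in terms of the Jost function evaluated near $\infty$ on $\calS$, where Szeg\H{o} asymptotics (the finite gap analog of Theorem~\ref{T2.6}/\ref{T2.7}) gives existence of the limits; the $\ell^2$-type control needed to make the $b$-sum converge comes from (a)+(b)+(c) via the sum rule, paralleling the passage from \eqref{2.12} to \eqref{2.14} in the $\ell=0$ case.

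Finally, to remove the rationality hypothesis on the harmonic measures — needed because the polynomial map $\Delta$ only exists in the rational case — one approximates $\fre$ from outside and inside by finite gap sets with rational harmonic measures, proves the relevant estimates uniformly (the constants in the Lieb--Thirring bound of Theorem~\ref{T4.2} and in the sum rules are stable under such approximation, which is the content of the continuity of $\calT_\fre$, of $C(\fre)$, and of $\rho_\fre$ in $\fre$), and passes to the limit; alternatively one works directly on $\calS$ with the Fuchsian group, never invoking $\Delta$, which is cleaner but requires the full character-automorphic Hardy space apparatus. The main obstacle I anticipate is Step two, the ``any two imply the third'' direction: the sum rules in the finite gap case are genuinely more delicate than for $[-2,2]$ because the Abelian integrals and the relative Szeg\H{o} function on $\calS$ carry the $z$-dependent frequency module noted after \eqref{3.11}, so the ``boundary term'' in the entropy identity is not a single constant but involves the positions of the Dirichlet-type data (the zeros of $G_{00}$ in the gaps); controlling this term and showing it is finite exactly when the eigenvalue sum and the Szeg\H{o} integral are finite is the technical heart. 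Secondarily, verifying that the limit point produced in Step three actually lies in $\calT_\fre$ rather than in some larger class requires knowing its $m$-function is \emph{minimal} of degree $\ell+1$, which in turn needs the reflectionless property to survive the limit — this is where one uses that (b) plus regularity (a consequence of (c)) forces the Lyapunov exponent to its equilibrium value and hence, by Kotani theory as in the third definition, reflectionlessness.
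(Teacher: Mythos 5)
The paper itself offers no proof of Theorem~\ref{T4.3}: it is a survey, and the result is attributed to Christiansen--Simon--Zinchenko \cite{CSZ2} (with parts going back to Widom, Aptekarev, and Peherstorfer--Yuditskii). The only argument actually carried out in the text is Remark~3 after the theorem, which derives (e) (and (d)) from the Szeg\H{o} asymptotics of Theorem~\ref{T4.5} by expanding $\log(p_n(z)/\ti p_n(z))$ at $z=\infty$ as in \eqref{4.12}--\eqref{4.13}; your Step four is exactly this. Moreover, Section~\ref{s5} states explicitly that Theorem~\ref{T4.3} is among the results proved by uniformizing $\bbC\cup\{\infty\}\setminus\fre$ by a Fuchsian group acting on $\bbD$, i.e., the route you relegate to an ``alternatively.'' Your description of that route --- (a) as a Blaschke condition for the lifted eigenvalues, (b) as a Szeg\H{o} condition against harmonic measure, a step-by-step sum rule for ``any two imply the third,'' and extraction of $\ti J$ from boundary values of the Jost/Szeg\H{o} function as a minimal Herglotz function on $\calS$ --- is a fair outline of the actual strategy in the literature.

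The primary route you propose, however, has two genuine gaps. First, even when all harmonic measures are rational and $\fre=\Delta^{-1}([-2,2])$ for a degree-$p$ polynomial $\Delta$, the pullback of a general measure on $\fre$ is not a scalar Jacobi problem on $[-2,2]$: $\Delta(J)$ is a $(2p+1)$-diagonal matrix, i.e., a block Jacobi matrix with $p\times p$ blocks, so ``apply the known scalar result'' would require a critical ($1/2$-power) Szeg\H{o} theorem for matrix orthogonal polynomials, which you do not supply and which Theorem~\ref{T2.4} does not give. This is precisely why the magic-formula method of \cite{DKS} yields the $\ell^2$/quasi-Szeg\H{o} Theorem~\ref{T4.7} but is not the method behind Theorem~\ref{T4.3}. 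Second, the proposed removal of the rationality hypothesis by approximating $\fre$ by rational-harmonic-measure sets is unsupported: you give no argument that conditions (a)--(c), the Szeg\H{o} integral with its critical weight, the isospectral torus, or the conclusion \eqref{4.8} behave continuously in $\fre$, and controlling such limits is at least as hard as the theorem itself --- this is exactly the difficulty the covering-map method was introduced to avoid. As written, the proposal becomes a viable plan only if the primary route is discarded and the Fuchsian-group branch is carried out in full.
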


\begin{remarks} 1. Depending on which implications one looks at, only part of (c) is needed. For example,
if (a) holds,
\begin{equation} \lb{4.11}
\text{\rm{(b)}} \; \Leftrightarrow \; \limsup_{n\to\infty} \f{a_1 \dots a_n}{C(\fre)^n} > 0
\end{equation}
(that is, indeed, $\limsup$ and not $\liminf$).

\smallskip
2. As stated, this theorem (except for (e); see below) is due to Christiansen--Simon--Zinchenko
\cite{CSZ2}, but parts of it were known. While \cite{CSZ2} focus on Szeg\H{o} asymptotics (see below),
the work of Widom \cite{Widom} and Aptekarev \cite{Apt} implied if there are no or finitely many $x_n$'s,
then (b) $\Rightarrow$ (c), and Peherstorfer--Yuditskii \cite{PY} proved (a) $+$ (b) $\Rightarrow$ (c)
(and as noted to us privately by Peherstorfer, combining their results and an idea of Garnett \cite{Gar}
yields \eqref{4.11}).

\smallskip
3. That (e) holds does not seem to have been noted before, although it follows easily from the results in \cite{CSZ2}. For $g_n(z) \equiv p_n(z)/\ti p_n(z)$ has a limit as $n\to\infty$ on $\bbC\setminus[\alpha_1,\beta_{\ell+1}]$
and that limit also exists and is analytic and nonzero at infinity (see Theorem \ref{T4.5} below). Since
\begin{equation} \lb{4.12}
z^{-n} p_n(z) = (a_1 \dots a_n)^{-1} \biggl( 1-\biggl(\, \sum_{j=1}^n b_j\biggr) z^{-1} + O(z^{-2})\biggr)
\end{equation}
near $z=\infty$,
\begin{equation} \lb{4.13}
\log (g_n(z)) =-\log \biggl( \f{a_1 \dots a_n}{\ti a_1 \dots \ti a_n}\biggr) - \bigg[\, \sum_{j=1}^n
\, (b_j - \ti b_j)\biggr] z^{-1} + O(z^{-2})
\end{equation}
so convergence of the analytic functions uniformly near $\infty$ implies convergence of the $O(z^{-1})$ term.
\end{remarks}

Theorems~\ref{T4.2} and \ref{T4.3} immediately imply:

\begin{corollary} \lb{C4.4} If \eqref{4.1} holds, so does \eqref{4.6}.
\end{corollary}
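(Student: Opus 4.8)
The plan is to deduce \eqref{4.6} from hypothesis \eqref{4.1} by verifying the two conditions (a) and (c) of Theorem~\ref{T4.3} and then invoking the implication ``(a) $+$ (c) $\Rightarrow$ (b)'' contained in that theorem. First I would observe that \eqref{4.1} says $J$ is an $\ell^1$ perturbation of some $\ti J \in \calT_\fre$, so Theorem~\ref{T4.1} applies and gives $\sigma_\ess(J) = \fre$ with purely a.c.\ spectrum on $\fre^\intt$; in particular the spectral measure of $J$ splits as in \eqref{4.4} with a.c.\ part supported on (all of) $\fre$ and the discrete eigenvalues $\{x_n\}_{n=1}^N$ lying in $\bbR \setminus \fre$. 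This puts us squarely in the setting of Theorem~\ref{T4.3}.

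Next, condition (a), namely $\sum_{n=1}^N \dist(x_n,\fre)^{1/2} < \infty$, is exactly the content of Theorem~\ref{T4.2}: its right-hand side is $C_0$ (a finite constant depending only on $\fre$) plus $C$ times the finite sum in \eqref{4.1}, so the Lieb--Thirring bound \eqref{4.2} is finite and (a) holds. For condition (c) I would argue that the ratio $a_1 \cdots a_n / (\ti a_1 \cdots \ti a_n)$ stays bounded above and below away from $0$: since $\sum_n |a_n - \ti a_n| < \infty$ and the $\ti a_n$ are bounded below by a positive constant (they parametrize an element of the compact torus $\calT_\fre$), the product $\prod_{n=1}^\infty (a_n/\ti a_n)$ converges to a finite nonzero limit, hence $R^{-1} \le a_1\cdots a_n/(\ti a_1 \cdots \ti a_n) \le R$ for a suitable $R>1$. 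Because $\ti J$ is regular, $\ti a_1 \cdots \ti a_n / C(\fre)^n$ is itself bounded above and below away from $0$ (indeed it is asymptotically almost periodic, as recalled at the end of Section~\ref{s3}), so combining the two gives \eqref{4.7}, i.e.\ condition (c), possibly after enlarging $R$.

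With (a) and (c) in hand, Theorem~\ref{T4.3} yields (b), which is precisely \eqref{4.6}, completing the proof. The only step requiring a little care is the second one: one must know that the normalized products $\ti a_1 \cdots \ti a_n / C(\fre)^n$ for the base point $\ti J \in \calT_\fre$ are bounded above and below away from zero, uniformly in $n$. This is where I expect the main (though mild) obstacle to lie; it is not a new difficulty but a fact about the isospectral torus --- it follows from regularity of $\ti J$ together with the almost periodicity of $a_1 \cdots a_n / C(\fre)^n$ noted at the end of the discussion of the third definition of $\calT_\fre$, and it is anyway implicitly used in the statement of Theorem~\ref{T4.3}(c). Everything else is a direct bookkeeping combination of Theorems~\ref{T4.1}, \ref{T4.2}, and \ref{T4.3}.
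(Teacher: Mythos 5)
Your proposal is correct and follows essentially the same route as the paper's own proof: condition (a) of Theorem~\ref{T4.3} via the Lieb--Thirring bound of Theorem~\ref{T4.2}, condition (c) via convergence of $\prod_n (a_n/\ti a_n)$ (from $\ell^1$ closeness and boundedness of $\ti a_n$, $\ti a_n^{-1}$) combined with the almost periodicity of $\ti a_1\cdots\ti a_n/C(\fre)^n$ bounded away from $0$ and $\infty$, and then Theorem~\ref{T4.3} to conclude \eqref{4.6}. The one fact you flag as a potential obstacle is exactly the fact the paper also invokes without further proof, so there is no gap.
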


\begin{proof} Since $\ti a_1 \dots \ti a_n/C(\fre)^n$ is almost periodic bounded away from $0$ and $\infty$,
and $\sum_{n=1}^\infty \abs{a_n - \ti a_n} <\infty$ and ${\ti a_n}$, ${\ti a_n}^{-1}$ bounded
imply $\sum_{n=1}^\infty \abs{1-a_n/\ti a_n}<\infty$, we have \eqref{4.9}, which implies \eqref{4.7}. By
Theorem~\ref{T4.2}, \eqref{4.1} $\Rightarrow$ \eqref{4.5}, so Theorem~\ref{T4.3} implies \eqref{4.6}.
\end{proof}

\begin{remark} This is a result of \cite{FS}, although \cite{CSZ2} conjectured Theorem~\ref{T4.2} and noted
it would imply this corollary.
\end{remark}

\begin{theorem} \lb{T4.5} If the conditions {\rm{(a)--(c)}} of Theorem~\ref{T4.3} hold, then for all
$z\in\bbC\cup\{\infty\} \setminus [\alpha_1, \beta_{\ell+1}]$, $\lim_{n\to\infty} p_n(z)/\ti p_n(z)$ exists and the limit
is analytic with zeros only at the $x_n$ in $\bbR\setminus [\alpha_1, \beta_{\ell+1}]$.
\end{theorem}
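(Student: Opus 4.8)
The natural route is the Fuchsian-group uniformization of Section~\ref{s5} (following Peherstorfer--Yuditskii \cite{PY} and Christiansen--Simon--Zinchenko \cite{CSZ2}), so the plan is to lift everything to the unit disk. Let $\mathbf{x}\colon\bbD\to\bbC\cup\{\infty\}\setminus\fre$ be the universal covering map with deck group $\Gamma$, normalized by $\mathbf{x}(0)=\infty$, and let $B$ be the character-automorphic Blaschke product with $\abs{B(z)}=\exp(-G_\fre(\mathbf{x}(z)))$ and character $\chi$. Since $p_n$ and $\ti p_n$ are ordinary polynomials, $p_n\circ\mathbf{x}$ and $\ti p_n\circ\mathbf{x}$ are $\Gamma$-invariant, hence so is the ratio $g_n:=(p_n/\ti p_n)\circ\mathbf{x}$, which descends to a function on $\bbC\cup\{\infty\}\setminus\fre$. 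All zeros of $\ti p_n$ lie in $[\alpha_1,\beta_{\ell+1}]$, and all zeros of $p_n$ lie in $[\alpha_1,\beta_{\ell+1}]$ together with, for large $n$, exactly one simple zero near each $x_n\in\bbR\setminus[\alpha_1,\beta_{\ell+1}]$; thus over the region $\bbC\cup\{\infty\}\setminus[\alpha_1,\beta_{\ell+1}]$ the functions $g_n$ are holomorphic and nonvanishing except for those finitely many zeros, and it suffices to prove that $g_n$ converges there uniformly on compacts to a limit that is nonzero off the $x_n$.

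The second step is normality. Writing $\abs{g_n\circ\mathbf{x}}=\abs{p_n(\mathbf{x})B^n}/\abs{\ti p_n(\mathbf{x})B^n}$, the denominator is, by the isospectral-torus asymptotics recalled around \eqref{3.11} (where $\ti p_n\,e^{-n\Phi_{\rho_\fre}}$ is asymptotically almost periodic and bounded away from $0$), uniformly comparable to $C(\fre)^n$ on compacts for large $n$; the numerator is $\asymp C(\fre)^n/(a_1\cdots a_n)$, which by \eqref{4.7} is trapped between $R^{-1}C(\fre)^n$ and $R\,C(\fre)^n$. Hence $\{g_n\}$ is locally bounded off the $x_n$, so by Montel's theorem every subsequence has a locally uniformly convergent subsubsequence, with limit $G$ holomorphic off the $x_n$. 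Since by \eqref{4.9} the value of $g_n$ at $\infty$, namely $a_1\cdots a_n/(\ti a_1\cdots\ti a_n)$, converges in $(0,\infty)$, the limit $G$ is not identically zero, so by Hurwitz its zeros are exactly the simple ones at the $x_n$; descending again, $G$ extends to $\bbC\cup\{\infty\}\setminus\fre$.

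The third step---the heart of the matter---is to pin down any such subsequential limit $G$ uniquely, which then forces convergence of the whole sequence. Equivalently one identifies the inner--outer structure of $G\circ\mathbf{x}$ on $\bbD$: its boundary modulus on $\partial\bbD$ equals, up to an explicit bounded factor coming from the isospectral-torus density, $f(\mathbf{x}(\cdot))^{-1/2}$, and this has integrable logarithm precisely because \eqref{4.6} is the Szeg\H{o} condition $\int_\fre\log f\,d\rho_\fre>-\infty$; its zeros are the $\Gamma$-orbits over the $x_n$, which form a Blaschke sequence precisely because \eqref{4.5} is equivalent to $\sum_n G_\fre(x_n)<\infty$; and its value at $z=0$ is the limit in \eqref{4.9}. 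The last piece of data---the overall unimodular constant, together with the fact that the character-$\chi^n$ oscillatory factors carried by $p_n\circ\mathbf{x}$ and by $\ti p_n\circ\mathbf{x}$ cancel in the ratio rather than leaving residual oscillation---is the content of the step-by-step (relative Szeg\H{o}) sum rule of \cite{CSZ2}: one runs coefficient stripping starting from site $n$, shows the relative Szeg\H{o} function at step $n$ converges, using \eqref{4.8} to bound individual steps and \eqref{4.5}--\eqref{4.6} to control the tails. This is the step I expect to be the main obstacle, since it is exactly where one must upgrade the merely $\oh(1)$ control on $a_n-\ti a_n$ and $b_n-\ti b_n$ into genuine convergence of the polynomial ratio; everything else (normality, Hurwitz, uniqueness of a bounded analytic function from its modulus, zeros, and one value) is soft.

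Finally, with $G$ identified and independent of the subsequence, $p_n(z)/\ti p_n(z)\to G(z)$ for all $z\in\bbC\cup\{\infty\}\setminus[\alpha_1,\beta_{\ell+1}]$, uniformly on compacts. On this region $G$ is analytic---the gap zeros of $\ti p_n$ and the oscillating Dirichlet-data zeros of $\ti p_n$ in the gaps are excluded by staying off $[\alpha_1,\beta_{\ell+1}]$---and its only zeros are the simple ones at the $x_n\in\bbR\setminus[\alpha_1,\beta_{\ell+1}]$ inherited from the eventual zeros of $p_n$, which is the assertion of Theorem~\ref{T4.5}.
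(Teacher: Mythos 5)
This is a survey: the paper does not prove Theorem~\ref{T4.5} at all---it attributes the result to \cite{CSZ2} (with antecedents in \cite{PY,PYarx,Widom,Apt}) and only gestures at the method in Section~\ref{s5}. So the comparison is really against the strategy of \cite{CSZ2}, and your outline does follow that strategy: uniformize $\bbC\cup\{\infty\}\setminus\fre$ by $\bbD$, use the character-automorphic Blaschke product $B$ with $\abs{B}=e^{-G_\fre\circ\mathbf{x}}$, and control $p_n B^n$ via Szeg\H{o}-function and sum-rule machinery. As a roadmap it is the right one. But as a proof it has two genuine gaps, and they sit exactly where the theorem's content lives.

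First, the normality step is close to circular. The lower bound on $\abs{\ti p_n}e^{-n\Phi_{\rho_\fre}}$ is indeed free from \eqref{3.11}, but your assertion that $\abs{p_n(\mathbf{x})B^n}\asymp C(\fre)^n/(a_1\cdots a_n)$ is (up to the value of the constant) the two-sided envelope of Szeg\H{o} asymptotics itself; it does not follow from condition \eqref{4.7} alone. The a priori \emph{upper} bound on $\abs{p_n}e^{-n\Phi_{\rho_\fre}}$ off the spectrum is obtained in \cite{CSZ2} from a step-by-step (Jensen-type) sum rule in which the Szeg\H{o} integral \eqref{4.6} and the Blaschke/Lieb--Thirring sum \eqref{4.5} enter as the terms that must be finite; you cannot invoke it before that sum rule has been run. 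Second, your uniqueness argument for the subsequential limit $G$---``bounded analytic function determined by boundary modulus, zeros, and one value''---tacitly assumes $G\circ\mathbf{x}$ has no singular inner factor and that its boundary modulus on $\partial\bbD$ really is the claimed $(f/\ti f)^{-1/2}(\mathbf{x}(\cdot))$. The first point needs an argument (for finite gap sets it is true, but it is a theorem about the Fuchsian group, not a formality), and the second requires upper and lower semicontinuity of the entropy/Szeg\H{o} term under the off-axis convergence you extracted from Montel, which is again the sum-rule step. You correctly flag step three as ``the heart of the matter'' and defer it wholesale to the relative Szeg\H{o} function of \cite{CSZ2}; that is an honest assessment, but it means the proposal is an accurate table of contents for the proof rather than a proof.
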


\begin{remarks} 1. In this form, this result is from \cite{CSZ2}, although earlier it appeared implicitly in Peherstorfer--Yuditskii \cite{PY,PYarx}, and special
cases (with stronger assumptions on the $x_n$'s) are in \cite{Widom,Apt}. See also \cite{Rice}.

\smallskip
2. There is also an asymptotic result on $\fre$ not pointwise but in $L^2 (d\mu)$ sense; see \cite{CSZ2}.
\smallskip

3. Asymptotics results for orthogonal polynomials on finite gap sets have been pioneered by Akhiezer and Tom\v{c}uk \cite{Ah60, AT61}.
\end{remarks}

We do not know an analog of the ``if and only if'' statement of Theorem~\ref{T2.7}, but there is one direction:

\begin{theorem} \lb{T4.6} Let $\{\ti a_n, \ti b_n\}_{n=1}^\infty$ be an element of the isospectral torus,
$\calT_\fre$, of a finite gap set, $\fre$. Let $\{a_n,b_n\}_{n=1}^\infty$ be another set of Jacobi parameters
and $\delta a_n,\delta b_n$ given by
\[
\delta a_n = a_n - \ti a_n, \qquad \delta b_n = b_n - \ti b_n
\]
Suppose that
\begin{SL}
\item[{\rm{(a)}}]
\begin{equation} \lb{4.14}
\sum_{n=1}^\infty\, \abs{\delta a_n}^2 + \abs{\delta b_n}^2 <\infty
\end{equation}
\item[{\rm{(b)}}] For any $\bk \in\bbZ^\ell$,
\begin{equation} \lb{4.15}
\sum_{n=1}^N e^{2\pi i (\bk\cdot \pmb\omega) n} \delta a_n \quad\text{and}\quad \sum_{n=1}^N
e^{2\pi i (\bk \cdot \pmb\omega) n} \delta b_n
\end{equation}
have {\rm{(}}finite{\rm{)}} limits as $N\to\infty$.
\item[{\rm{(c)}}] For every $\veps > 0$,
\begin{equation} \lb{4.16}
\sup_N \biggl\{ \biggl| \sum_{n=1}^N e^{2\pi i (\bk\cdot \pmb\omega) n} \delta a_n \biggr| +
\biggl| \sum_{n=1}^N e^{2\pi i (\bk \cdot \pmb\omega) n} \delta b_n\biggr| \biggr\} \leq C_\veps \exp (\veps \abs{\bk})
\end{equation}
\end{SL}
Let $p_n(z)$ {\rm{(}}resp.\ $\ti p_n(z)${\rm{)}} be the orthonormal polynomials for $\{a_n,b_n\}_{n=1}^\infty$
{\rm{(}}resp.\ $\{\ti a_n, \ti b_n\}_{n=1}^\infty${\rm{)}}. Then for any $z\in\bbC\setminus\bbR$,
\begin{equation} \lb{4.17}
\lim_{n\to\infty}\, \f{p_n(z)}{\ti p_n(z)}
\end{equation}
exists and is finite and nonzero.
\end{theorem}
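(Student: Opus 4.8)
The plan is to adapt the Jost-solution construction of Damanik--Simon \cite{Jost1} --- the engine behind Theorem~\ref{T2.7} --- to the present setting, the one structural novelty being that the constant ``plane-wave'' reference solutions $z^{\mp n}$ of the free equation must be replaced by the Baker--Akhiezer (Weyl) solutions of $\ti J$, whose dependence on $n$ is almost periodic rather than constant. Fix $z\in\bbC\setminus\bbR$. From the description of the isospectral torus in Section~\ref{s3} one has two linearly independent solutions of $\ti Ju=zu$: the Weyl solution $\psi_n(z)$, which is $\ell^2$ at $+\infty$ and decays like $e^{-nG_\fre(z)}$ times an almost periodic factor, and $\ti p_n(z)$ itself, which grows like $e^{nG_\fre(z)}$ times an almost periodic factor (here $G_\fre(z)>0$ and its $n$-independence lets us treat these rates as uniform on compact subsets of $\bbC\setminus\bbR$). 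Their Wronskian $W_{\ti J}(\ti p,\psi):=\ti a_n(\ti p_{n+1}\psi_n-\ti p_n\psi_{n+1})$ is a nonzero constant, and --- the fact that makes hypotheses (b)--(c) exactly the right ones --- the product $\ti p_n(z)\psi_n(z)$, as a function of $n$, is asymptotically almost periodic with frequency module generated by $\pmb\omega$ (the $z$-dependent frequency coming from the conjugate harmonic function of $\Phi_{\rho_\fre}$ cancels between the two factors), and, because the $n$-dependence of the Baker--Akhiezer function enters only through an $e^{nG_\fre(z)}$-type multiplier and through theta functions of $n\pmb\omega+\text{const}$, its Bohr--Fourier coefficients satisfy $|\widehat{(\ti p\psi)}_{\bk}(z)|\le C(z)\,e^{-\kappa(z)|\bk|}$ for some $\kappa(z)>0$; the same applies to $\ti p_{n+1}\psi_n$, $\ti p_n\psi_{n+1}$, and $\psi_{n+1}\psi_n$.

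Next comes variation of parameters. Write the orthonormal polynomials of $J$ as
\[
p_n(z)=A_n\,\ti p_n(z)+B_n\,\psi_n(z),
\]
with $A_n,B_n$ recovered from $p_n,p_{n+1}$ by inverting the fundamental matrix of $\{\ti p,\psi\}$; since $p$ solves $Ju=zu$ and not $\ti Ju=zu$, the coefficients drift, and a short computation using $a_n=\ti a_n+\delta a_n$, $b_n=\ti b_n+\delta b_n$ and the constancy of the $\ti J$-Wronskian gives a first-order recursion in which $A_{n+1}-A_n$ carries a factor $\psi_\bullet$ and $B_{n+1}-B_n$ a factor $\ti p_\bullet$, each multiplied by a linear combination of $\delta a_n,\delta a_{n-1},\delta b_n$ and by values of $p$ near index $n$. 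Substituting the leading behaviour $p\approx A_\infty\ti p$, one finds $A_{n+1}-A_n\approx A_\infty(\delta a_n+\delta b_n)$ times the bounded almost periodic quantity $(\ti p\psi)_{\sim n}$, whereas $B_{n+1}-B_n\approx A_\infty\,\delta_n\,(\ti p^{\,2})_{\sim n}$ grows like $e^{2nG_\fre(z)}\delta_n$. I would therefore organize the solution as a fixed point: recover $B$ by summing its increments from $+\infty$, which is absolutely convergent because $\sum_m e^{-2mG_\fre(z)}|\delta_m|<\infty$ by Cauchy--Schwarz and hypothesis (a); recover $A$ from $A_n=A_\infty-\sum_{m\ge n}(A_{m+1}-A_m)$ after fixing the normalization $A_\infty$. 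The dangerous growing increment of $B$ is never summed forward, and the split-sum device --- estimating $e^{-2nG_\fre(z)}\sum_{m\le n}e^{2mG_\fre(z)}|\delta_m|$ by splitting at $m=n/2$ and using that the $\ell^2$-tails of $\delta$ in (a) tend to $0$ --- shows $|B_n|=o(e^{2nG_\fre(z)})$, so that $B_n\psi_n/\ti p_n\to 0$.

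The single place where (b) and (c) enter --- and the heart of the matter --- is the convergence, for fixed $n$, of $\sum_{m\ge n}\delta a_m(\ti p\psi)_{\sim m}$ and the analogous $\delta b$-sum. Writing $(\ti p\psi)_{\sim m}=\sum_{\bk\in\bbZ^\ell}\widehat{(\ti p\psi)}_{\bk}(z)\,e^{2\pi i(\bk\cdot\pmb\omega)m}$ up to an exponentially small (hence absolutely summable) remainder, and interchanging sums, the expression becomes $\sum_{\bk}\widehat{(\ti p\psi)}_{\bk}(z)\sum_{m\ge n}\delta a_m\,e^{2\pi i(\bk\cdot\pmb\omega)m}$ plus an absolutely convergent term: hypothesis (b) makes each inner sum converge, and hypothesis (c), used with $\veps<\kappa(z)$, together with the exponential decay $|\widehat{(\ti p\psi)}_{\bk}(z)|\le C(z)e^{-\kappa(z)|\bk|}$ established above, makes the $\bk$-sum converge uniformly in the cutoff. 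This dovetailing of conditionally convergent weighted sums of $\delta a,\delta b$ against exponentially--Fourier--decaying almost periodic weights is precisely what (b)--(c) are designed to supply. I expect the main obstacle to be the bookkeeping that makes this work inside the coupled nonlinear iteration --- pushing through the Damanik--Simon renormalization so that (a) (which is only $\ell^2$, not $\ell^1$) controls the quadratic corrections while only the conditionally convergent ``diagonal'' part consumes (b)--(c) --- together with the preliminary task of establishing the exponential decay of the Bohr--Fourier coefficients of the Baker--Akhiezer solutions.

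Finally one reads off the conclusion. The fixed point gives $A_n\to A_\infty$ and $B_n=o(e^{2nG_\fre(z)})$, hence $p_n(z)/\ti p_n(z)=A_n+B_n\psi_n(z)/\ti p_n(z)\to A_\infty$, which is finite. To see $A_\infty\ne 0$, run the same scheme once more to produce the (renormalized) Jost solution $u_n$ of $Ju=zu$ with $u_n/\psi_n\to 1$; writing $u_n=a'_n\ti p_n+b'_n\psi_n$ one gets $a'_n=o(e^{-2nG_\fre(z)})$ and $b'_n\to 1$. The Wronskian $W_J(p,u):=a_n(p_{n+1}u_n-p_nu_{n+1})$ is an exact constant, and letting $n\to\infty$ in the $\{\ti p,\psi\}$-expansions of $p$ and $u$ --- all cross terms of the form $a'_n\ti p_n^{\,2}$ or $B_n\psi_n^{\,2}$ tending to $0$ --- yields $W_J(p,u)=A_\infty\,W_{\ti J}(\ti p,\psi)$. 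Since $z\notin\bbR$ is not an eigenvalue of $J$, $p(z)$ is not a multiple of the Weyl solution $u(z)$, so $W_J(p,u)\ne 0$; as $W_{\ti J}(\ti p,\psi)\ne 0$ we conclude $A_\infty\ne 0$, which also identifies $\lim_n p_n(z)/\ti p_n(z)$ as the ratio $W_J(p,u)/W_{\ti J}(\ti p,\psi)$ of Jost-type functions.
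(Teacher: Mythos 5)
This survey states Theorem~\ref{T4.6} without proof, citing \cite{CSZ3}, so there is no in-paper argument to compare against; measured against the strategy actually used in \cite{CSZ3}, your proposal is essentially the same proof: a variation-of-parameters (equivalently, conjugated transfer-matrix) expansion of $p_n$ in the basis $\{\ti p_n,\psi_n\}$ of solutions for the isospectral-torus element, with the exponential decay of the Bohr--Fourier coefficients of the almost periodic products $\ti p_\bullet\psi_\bullet$ (the $z$-dependent frequency cancelling) letting hypotheses (b)--(c) handle the conditionally convergent first-order sums while (a) controls the quadratic corrections, and a Wronskian/Jost-solution argument giving $A_\infty\neq 0$. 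The one sentence that is off is the claim that $B$ is recovered ``by summing its increments from $+\infty$'' with absolute convergence coming from $\sum_m e^{-2mG_\fre(z)}|\delta_m|<\infty$: the increments of $B$ are of size $|\delta_m|e^{2mG_\fre(z)}$ and are not summable, so $B_n$ must be summed forward from a finite index and only the renormalized quantity $e^{-2nG_\fre(z)}B_n$ controlled, exactly by the split-sum estimate you then describe (it is the coefficient $a'_n$ of the Jost solution, not $B_n$, that is recovered by an absolutely convergent sum from $+\infty$); with that sentence repaired the outline is correct.
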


\begin{remarks} 1. Here $\pmb\omega=(\omega_1,\dots,\omega_\ell)$ is the $\ell$-tuple of harmonic measures (i.e., $\omega_j=\rho_\fre([\alpha_j,\beta_j])$)
and $\bk\cdot \pmb\omega =\sum_{j=1}^\ell
k_j \omega_j$. We thus require infinitely many conditions.

\smallskip
2. This result is from \cite{CSZ3}.

\smallskip
3. If the torus consists of period $p$ elements (i.e., each $\rho_\fre ([\alpha_j, \beta_j])$ is $k_j/p$, where there
is no common factor for $p,k_1, \dots, k_\ell$), then the infinity of conditions \eqref{4.15} reduces to
the finitely many conditions that for $j=1,2,\dots, p$, $\sum_{n=0}^N \delta a_{np+j}$ and $\sum_{n=0}^N \delta b_{np+j}$ have finite limits and \eqref{4.16} becomes automatic.

\smallskip
4. \cite{CSZ3} uses this theorem to construct examples where Szeg\H{o} asymptotics holds, but both \eqref{4.5} and
\eqref{4.6} fail to hold.
\end{remarks}

An analog of Theorem~\ref{T2.8} is not known for general $\fre$ but is known in one special case. We say $\fre$ is
$p$-periodic with all gaps open if $\ell=p-1$, and for $j=1, \dots, p$, $\rho_\fre ([\alpha_j,\beta_j])=1/p$.

We also need a notion of approach to the isospectral torus rather than a single element. Given two Jacobi matrices,
we define
\begin{equation} \lb{4.18}
d_m (J,J') = \sum_{k=0}^\infty e^{-\abs{k}} (\abs{a_{m+k} - a'_{m+k}} + \abs{b_{m+k} - b'_{m+k}})
\end{equation}
and
\begin{equation} \lb{4.19}
d_m (J,\calT_\fre) = \inf_{J'\in\calT_\fre} d_m (J, J')
\end{equation}

\begin{theorem} \lb{T4.7} Let $\fre$ be $p$-periodic with all gaps open. Let $J$ be a Jacobi matrix with
spectral measure obeying \eqref{4.4} and eigenvalues $\{x_n\}_{n=1}^N$ outside $\fre$. Then
\begin{equation} \lb{4.20}
\sum_{m=1}^\infty d_m (J,\calT_\fre)^2 <\infty
\end{equation}
if and only if
\begin{alignat}{2}
&\text{\rm{(a)}} \qquad && \sigma_\ess (J) =\fre \lb{4.21} \\
&\text{\rm{(b)}} \qquad && \sum_{n=1}^N \dist(x_n,\fre)^{3/2} < \infty \lb{4.22} \\
&\text{\rm{(c)}} \qquad && \int_\fre \dist (x,\bbR\setminus\fre)^{+1/2} \log [f(x)] \, dx >-\infty \lb{4.23}
\end{alignat}
\end{theorem}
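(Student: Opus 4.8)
The plan is to reduce the $p$-periodic finite gap problem to the already-known $\ell=0$ situation (Theorem~\ref{T2.8}) by exploiting the fact that a $p$-periodic $\tilde J$ with all gaps open can be block-diagonalized. Concretely, for a $p$-periodic two-sided Jacobi matrix $\tilde J\in\calT_\fre$ one has a discriminant $\Delta(x)$, a real polynomial of degree $p$, with $\Delta^{-1}([-2,2])=\fre$, and the Floquet/Bloch transform conjugates $\tilde J$ (acting on $\ell^2(\bbZ)$) to a direct integral of $p\times p$ matrices whose eigenvalue curves sweep out $\fre$. The point is that $\Delta(J)$, for a Jacobi matrix $J$ which is a decaying perturbation of $\tilde J$, is not itself a Jacobi matrix but is a finite-range (banded, width $p$) operator whose ``block Jacobi'' structure on $\ell^2(\bbZ_{\geq1};\bbC^p)$ has essential spectrum $[-2,2]$ after the normalization built into $\Delta$. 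This is the Killip--Simon ``magic formula'' circle of ideas, and the first step is to set up this correspondence carefully: $J\mapsto \Delta(J)$ sends the isospectral torus $\calT_\fre$ onto (a torus inside) the set of period-$p$ block Jacobi matrices with spectrum $[-2,2]$, eigenvalues $x_n$ of $J$ outside $\fre$ go to eigenvalues $\Delta(x_n)$ of $\Delta(J)$ outside $[-2,2]$, and the a.c.\ part $f(x)\,dx$ of $d\mu$ pushes forward to the a.c.\ part of the matrix-valued spectral measure of $\Delta(J)$.

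The second step is to translate the three conditions (a)--(c) and the left-hand side $\sum_m d_m(J,\calT_\fre)^2$ through this map. Condition (a), $\sigma_\ess(J)=\fre$, is equivalent to $\sigma_\ess(\Delta(J))=[-2,2]$, i.e.\ the Blumenthal--Weyl condition for the block problem. For (b): since $\Delta$ has simple zeros and is bounded on $\fre$ with square-root behaviour of $\Delta\mp2$ at the band edges, $\dist(x_n,\fre)^{3/2}$ is comparable, uniformly, to $\dist(\Delta(x_n),[-2,2])^{3/2}$ whenever $x_n$ is near $\fre$, and eigenvalues far from $\fre$ contribute a convergent tail on both sides; so \eqref{4.22} is equivalent to the Lieb--Thirring condition $\sum\dist(\Delta(x_n),[-2,2])^{3/2}<\infty$ for $\Delta(J)$. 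For (c): the substitution $x\mapsto\Delta(x)$ together with the identity $\rho_\fre=\Delta^*(\rho_{[-2,2]})$ (the equilibrium measure of $\fre$ is the pullback under $\Delta$ of the equilibrium measure of $[-2,2]$) converts $\int_\fre \dist(x,\bbR\setminus\fre)^{1/2}\log f(x)\,dx$ into the quasi-Szeg\H{o} integral $\int_{-2}^2(4-E^2)^{1/2}\log[\det F(E)]\,dE$ for the matrix weight $F$ of $\Delta(J)$, up to a term coming from the $p$ sheets of $\Delta^{-1}$ that is finite precisely because $\fre$ has $p$ bands each of positive length. Finally, a matrix version of the $\ell^2$ sum rule (the content of Killip--Simon and its block extension) identifies $\sum_m d_m(J,\calT_\fre)^2<\infty$ with $\sum_k \|A_k-\tilde A_k\|^2+\|B_k-\tilde B_k\|^2<\infty$ for the block parameters of $\Delta(J)$ versus those of the period-$p$ limit — here one uses that $d_m(J,\calT_\fre)$ measures distance to the whole torus, which on the block side is exactly distance to the block isospectral point, and the exponential weights in \eqref{4.18} are harmless.

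The third step is then to invoke the block-matrix analog of Theorem~\ref{T2.8}: for block Jacobi matrices on $\ell^2(\bbZ_{\geq1};\bbC^p)$ that are Hilbert--Schmidt perturbations of the free block matrix, $\sum\|A_k-\tilde A_k\|^2+\|B_k-\tilde B_k\|^2<\infty$ holds iff (Blumenthal--Weyl) $\sigma_\ess=[-2,2]$, (Lieb--Thirring) $\sum\dist(E_n,[-2,2])^{3/2}<\infty$, and (quasi-Szeg\H{o}) $\int_{-2}^2(4-E^2)^{1/2}\log\det F(E)\,dE>-\infty$. Combining with the equivalences of Step~2 gives the theorem. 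I expect the main obstacle to be Step~2(c) and the careful bookkeeping in Step~1: one must check that the matrix-valued spectral measure of $\Delta(J)$ genuinely has the form (a.c.\ part with density $F$ comparable to $f\circ\Delta^{-1}$ summed over sheets) $+$ (point masses at $\Delta(x_n)$) $+$ (possible singular continuous piece which (a) rules out), that the pullback identity for equilibrium measures is being used with the correct Jacobian, and that the band-edge behaviour of $\Delta$ does not spoil integrability of the quasi-Szeg\H{o} integrand — this last point is exactly where ``all gaps open'' enters, since a closed gap would make $\Delta'$ vanish at an interior point of $\fre$ and break the comparability of the two weights.
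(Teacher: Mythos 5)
The paper gives no proof of Theorem~\ref{T4.7}; it simply attributes it to Damanik--Killip--Simon \cite{DKS}, whose argument is exactly the magic-formula reduction you describe: $\Delta(J)=S^p+S^{-p}$ characterizes $\calT_\fre$, $\Delta(J)$ for a perturbed $J$ is treated as a block Jacobi matrix on $\ell^2(\bbZ_{\geq1};\bbC^p)$, and the matrix-valued Killip--Simon ($P_2$ sum rule) theorem is invoked, so your sketch follows the intended route. One small correction: with all gaps open, $\Delta\mp2$ has \emph{simple} (linear) zeros at the band edges rather than square-root behaviour, so $\dist(\Delta(x),[-2,2])$ is linearly comparable to $\dist(x,\fre)$ near $\fre$ --- which is precisely what makes the $3/2$-moments and the quasi-Szeg\H{o} weights correspond.
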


\begin{remark} This theorem is due to Damanik--Killip--Simon \cite{DKS}. Their method is specialized to the
periodic case, and in that case, proves some of the earlier results of this section, such as Theorem~\ref{T4.2}.
\end{remark}

\begin{theorem} \lb{T4.7A} Suppose $J$ is a Jacobi matrix with $\sigma_\ess(J)=\fre$ and so that the $f$ of
\eqref{4.4} is a.e.\ strictly positive on $\fre$. Then
\begin{equation} \lb{4.23a}
\lim_{m\to\infty} d_m (J,\calT_\fre) =0
\end{equation}
\end{theorem}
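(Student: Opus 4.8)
The plan is to deduce this Denisov--Rakhmanov-type result for the full isospectral torus from a right-limits argument combined with the known characterization of the torus as the reflectionless operators on $\fre$. First I would recall the notion of a right limit: a two-sided Jacobi matrix $J^{(r)}$ is a right limit of $J$ if there is a sequence $m_k\to\infty$ so that $a_{m_k+j}\to a^{(r)}_j$ and $b_{m_k+j}\to b^{(r)}_j$ for every $j\in\bbZ$. Since $\{a_n,b_n\}$ is bounded, every sequence $m_k\to\infty$ has a subsequence along which a right limit exists. The assertion \eqref{4.23a} is precisely the statement that \emph{every} right limit of $J$ lies in $\calT_\fre$; indeed, $d_m(J,\calT_\fre)\not\to 0$ would give a sequence $m_k\to\infty$ with $d_{m_k}(J,\calT_\fre)\ge\delta>0$, and passing to a convergent subsequence yields a right limit at positive $d_\infty$-distance from the (compact) torus, while conversely if all right limits are in $\calT_\fre$ a standard compactness argument forces $d_m(J,\calT_\fre)\to 0$.

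So the core task is to show that an arbitrary right limit $J^{(r)}$ of $J$ belongs to $\calT_\fre$, and by the third (equivalently first) definition of the isospectral torus it suffices to show that $J^{(r)}$ has $\sigma(J^{(r)})\subseteq\fre$ and is reflectionless on $\fre$. The spectral inclusion is the easy half: right limits of $J$ have spectrum contained in $\sigma_\ess(J)=\fre$, since $\sigma(J^{(r)})\subseteq\sigma_\ess(J)$ is a general fact about right limits (an approximate eigenvector for $J^{(r)}$ can be translated far out to produce one for $J$ witnessing membership in $\sigma_\ess(J)$). For the reflectionless property I would invoke the hypothesis that $f(x)>0$ for a.e.\ $x\in\fre$ via the step‑by‑step sum rule / entropy method of Killip--Simon as extended to the finite gap setting (this is the mechanism behind Theorems~\ref{T4.3} and \ref{T4.7}): positivity of $f$ a.e.\ on $\fre$, together with $\sigma_\ess(J)=\fre$, forces the a.c.\ spectrum of $J$ to be all of $\fre$, hence the a.c.\ spectrum of each right limit $J^{(r)}$ contains $\fre$; and an operator whose a.c.\ spectrum fills $\fre$ and whose spectrum is contained in $\fre$ must be reflectionless on $\fre$ by Kotani--Remling theory (the Remling oracle theorem: right limits of a half-line operator with a.c.\ spectrum are reflectionless on the a.c.\ essential support). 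Combining, $J^{(r)}$ is reflectionless on $\fre$ with $\sigma(J^{(r)})\subseteq\fre$, so $J^{(r)}\in\calT_\fre$ by the first definition.

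The main obstacle, and the place where care is genuinely needed, is establishing that the a.c.\ spectrum of $J$ exhausts $\fre$ purely from $\sigma_\ess(J)=\fre$ and $f>0$ a.e.\ on $\fre$ — in the one-gap case this is Theorem~\ref{T2.8A}, and the general finite gap version requires either the finite gap step‑by‑step sum rules of \cite{CSZ2} or a direct appeal to Remling's theory combined with the structure of $\calT_\fre$. Concretely, the cleanest route is: (i) Remling's theorem gives that every right limit $J^{(r)}$ is reflectionless on $\Sigma_{\ac}(J)$, the essential support of the a.c.\ part; (ii) one must upgrade $\Sigma_{\ac}(J)$ from ``has positive measure in every portion of $\fre$'' (immediate from $f>0$ a.e.) to ``equals $\fre$ up to measure zero,'' which is where the hypothesis $f>0$ \emph{a.e.}\ (not merely on a positive-measure set) is used; (iii) then $J^{(r)}$ is reflectionless on $\fre$, and the spectral inclusion $\sigma(J^{(r)})\subseteq\fre$ promotes this to $J^{(r)}\in\calT_\fre$. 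I would present steps (i) and (iii) as quick citations to Kotani--Remling and to the first definition of $\calT_\fre$, and spend the bulk of the written proof on step (ii) and on the compactness argument translating ``all right limits in $\calT_\fre$'' into \eqref{4.23a}.
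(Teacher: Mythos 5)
Your proposal is correct and follows essentially the route the paper itself indicates: the paper offers no proof of Theorem~\ref{T4.7A}, attributing it to Remling \cite{Rem} and remarking only that \eqref{4.23a} is equivalent, by a compactness argument, to the assertion that every right limit of $J$ lies in $\calT_\fre$ --- which is exactly the reduction you carry out, with Remling's theorem on right limits supplying the reflectionless property and the first definition of $\calT_\fre$ (together with $\sigma(J^{(r)})\subseteq\sigma_\ess(J)=\fre$) finishing the identification. One caveat: your intermediate parenthetical claim that a whole-line operator whose a.c.\ spectrum fills $\fre$ and whose spectrum is contained in $\fre$ must be reflectionless is false for general (non-ergodic) operators --- already for $\fre=[-2,2]$, perturbing a single $a_n$ downward yields $\sigma(J)=[-2,2]$ purely a.c.\ without $J$ being the free matrix, and the strict gap between ``a.c.\ spectrum on $\fre$'' and ``reflectionless on $\fre$'' is precisely what makes Remling's theorem nontrivial; fortunately your final three-step outline invokes the correct statement (right limits of a \emph{half-line} operator are reflectionless on $\Sigma_{\ac}$ of that operator, and $f>0$ a.e.\ on $\fre$ gives $\fre\subseteq\Sigma_{\ac}(J)$ up to null sets immediately, with no sum rules needed), so the argument stands.
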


\begin{remarks} 1. This is a result of Remling \cite{Rem}. For the periodic case, it was proven earlier by
\cite{DKS}, who conjecture the result for general $\fre$.

\smallskip
2. Remling replaces \eqref{4.23a} by the assertion that every right limit of $J$ (i.e., limit point of
$\{a_{n+r}, b_{n+r}\}_{n=1}^\infty$ as $r\to\infty$) is in $\calT_\fre$. By a compactness argument,
it is easy to see that this is equivalent to \eqref{4.23a}.
\end{remarks}

\begin{theorem} \lb{T4.8} Let $\fre$ be a finite gap set and $J$ a Jacobi matrix so that
\begin{alignat}{2}
&\text{\rm{(a)}} \qquad && \sigma_\ess (J) =\fre \lb{4.24} \\
&\text{\rm{(b)}} \qquad && J \text{ is regular, i.e., }
\lim_{n\to\infty} (a_1 \dots a_n)^{1/n} = C(\fre) \lb{4.25}
\end{alignat}
Then
\begin{equation} \lb{4.26}
\lim_{M\to\infty} \f{1}{M} \sum_{m=1}^M d_m (J,\calT_\fre)^2 =0
\end{equation}
\end{theorem}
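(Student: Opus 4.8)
The plan is to mimic the structure of the proof of Theorem~\ref{T2.9} (Simon \cite{S319}) adapted to the finite gap setting, using the Cesàro/density-of-zeros machinery of Stahl--Totik. First I would recall that regularity in the sense \eqref{4.25}, together with $\sigma_\ess(J)=\fre$, is equivalent to the statement that the density of zeros of the orthonormal polynomials $p_n$ converges weakly to $\rho_\fre$, and moreover that one has the pointwise upper bound on the whole essential spectrum: for $\rho_\fre$-a.e.\ $x\in\fre$, $\limsup_n |p_n(x)|^{1/n}\le 1$. The quantity $\frac1M\sum_{m=1}^M d_m(J,\calT_\fre)^2$ is a Cesàro average of a bounded nonnegative quantity (boundedness of $d_m$ follows from $\sigma_\ess(J)=\fre$, which forces $a_n,b_n$ to be bounded and $a_n$ bounded below), so by a standard subsequence/compactness argument it suffices to show: every right limit $J_r$ of $J$ lies in $\calT_\fre$, i.e.\ the set of right limits of a regular $J$ with $\sigma_\ess(J)=\fre$ is contained in the isospectral torus. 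Indeed, if $d_m(J,\calT_\fre)$ did not go to zero in Cesàro mean, there would be a set of $m$'s of positive upper density along which $d_m(J,\calT_\fre)\ge\delta>0$; extracting a right limit along such a sequence produces $J_r\notin\calT_\fre$, a contradiction. (This is exactly the reduction indicated in Remark~2 after Theorem~\ref{T4.7A}.)

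So the heart of the matter is: a regular two-sided Jacobi matrix $J_r$ (a right limit of a regular one-sided $J$ with $\sigma(J_r)\subseteq\sigma_\ess(J)=\fre$) is reflectionless on $\fre$ and has spectrum exactly $\fre$, hence lies in $\calT_\fre$ by the first definition of the isospectral torus. For this I would invoke the Kotani--Last--Simon--Remling circle of ideas. First, regularity of $J$ transfers to every right limit: the Thouless/subharmonicity argument shows the Lyapunov exponent of $J_r$ vanishes on $\fre$ (this is where \eqref{4.25} is used — it says the potential-theoretic ``entropy'' is saturated, forcing zero Lyapunov exponent for the limiting ergodic-type family). Second, by Kotani theory \cite{Kot,S168} (or Remling's oracle theorem \cite{Rem}), vanishing Lyapunov exponent on a set of positive measure forces $J_r$ to be reflectionless there; since this holds on all of $\fre$ up to measure zero, $J_r$ is reflectionless on $\fre$. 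Third, a reflectionless whole-line Jacobi matrix with $\sigma(J_r)\subseteq\fre$ actually has $\sigma(J_r)=\fre$ (the a.c.\ spectrum fills $\fre$ because the Lyapunov exponent vanishes there and Last--Simon \cite{LS} gives $\supp(\rho_\fre)=\fre\subseteq\sigma_\ac(J_r)$; no eigenvalues or singular continuous part can appear outside, and none inside by the reflectionless property combined with $\sigma(J_r)\subseteq\fre$). By the First Definition of $\calT_\fre$, this puts $J_r\in\calT_\fre$.

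The main obstacle — and the step requiring the most care — is the transfer of regularity to right limits and the deduction that the Lyapunov exponent vanishes: right limits need not be ergodic, so one cannot literally cite an ergodic-theorem identification of the Lyapunov exponent with the potential-theoretic Green's function. The correct tool here is Remling's theorem on the absence of a.c.\ spectrum \cite{Rem}, which says that if $J$ has a.c.\ spectrum on $\fre$ then every right limit is reflectionless on $\fre$; what one needs is that regularity \eqref{4.25} plus $\sigma_\ess(J)=\fre$ forces $\sigma_\ac(J)\supseteq\fre$ essentially, or else one argues directly at the level of the Cesàro-averaged Green's function / transfer matrices that regularity means the averaged logarithmic growth of $\|T_n\|$ is $nG_\fre$ off $\fre$ and $0$ on $\fre$, and this averaged-to-pointwise passage along right limits is precisely what yields the reflectionless conclusion. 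I would structure this step following Simon \cite{S319} closely, replacing $[-2,2]$ and $C(\fre)=1$ by general $\fre$ and $C(\fre)$, and citing the potential-theoretic estimates of Stahl--Totik \cite{StT} for the upper envelope $\limsup_n|p_n(x)|^{1/n}\le e^{-G_\fre(x)}$ (with $G_\fre\equiv 0$ on $\fre$) together with a lower bound in Cesàro mean coming from regularity. Once the reflectionless property is in hand, identifying the right limit as an element of $\calT_\fre$ is immediate from the First Definition.
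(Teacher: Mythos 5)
Your argument breaks at the very first step, the reduction. You claim it suffices to show that every right limit of $J$ lies in $\calT_\fre$. That intermediate statement is strictly stronger than \eqref{4.26} --- it would give $\lim_{m\to\infty}d_m(J,\calT_\fre)=0$, i.e., the conclusion of Theorem~\ref{T4.7A} under hypotheses far weaker than Remling's condition $f>0$ a.e. --- and it is false. (Note that your contradiction argument never actually uses that the bad set of $m$'s has positive upper density; any infinite bad set would serve, which is the tell-tale sign that you are proving too much.) A counterexample for $\fre=[-2,2]$, where $\calT_\fre=\{J_0\}$: let $J$ be free except on sparse blocks of lengths $L_k\to\infty$ on which $b=0$ and the $a$'s are $1-c/(1+j^2)\le 1$, $j$ the position within the block. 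Every right limit then has $b\equiv 0$ and $a\le 1$, hence norm $\le 2$ and spectrum inside $[-2,2]$, so $\sigma_\ess(J)=[-2,2]$; the sum of $|\log a|$ over each block is bounded uniformly in $k$ and the blocks have density zero, so $(a_1\cdots a_n)^{1/n}\to 1$ and $J$ is regular. Yet the right limits obtained by centering at the blocks are non-free matrices of norm $\le 2$, hence not reflectionless, so $d_m(J,\calT_\fre)\ge c/2$ along the block starts and does not tend to zero. The Ces\`aro average in \eqref{4.26} (and already in Theorem~\ref{T2.9}) is there precisely because the pointwise statement fails.

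The mechanism you propose for this (false) intermediate claim is also unsound on its own terms. Kotani theory converts a vanishing Lyapunov exponent into the reflectionless property only for ergodic families; right limits of a fixed $J$ carry no invariant measure, and the repair you suggest via Remling's theorem requires $\sigma_\ac(J)\supseteq\fre$, which regularity does not supply: there are regular measures with $\sigma_\ess(J)=\fre$ and purely singular spectrum (e.g., $a_n\equiv 1$ with random $b_n$ decaying like $n^{-1/2}$ for $\fre=[-2,2]$). For the record, the survey gives no proof of Theorem~\ref{T4.8}; the cited proofs stay entirely inside the Ces\`aro world. Regularity enters through the convergence of the density of zeros to $\rho_\fre$, i.e., $\frac1N\tr\,P(J_{N})\to\int P\,d\rho_\fre$ for polynomials $P$ together with $\frac1N\log(a_1\cdots a_N)\to\log C(\fre)$, and these averaged trace identities are fed into a variational/sum-rule characterization of $\calT_\fre$ whose equality case forces the averaged distance to the torus to vanish; Simon \cite{S319} implements this when all harmonic measures are rational by comparison with period-$p$ problems, and Kr\"uger \cite{Kru} handles the general case by averaging over the torus. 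As written, your argument does not establish the theorem.
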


\begin{remarks} 1. This result was proven in case all harmonic measures are rational by Simon \cite{S319},
who conjectured the result in general. It was proven by Kr\"uger \cite{Kru}.

\smallskip
2. By the Schwarz inequality, \eqref{4.26} is equivalent to
\begin{equation} \lb{4.27}
\lim_{M\to\infty} \f{1}{M} \sum_{m=1}^M d_m (J,\calT_\fre) =0
\end{equation}
\end{remarks}

\smallskip

We close this section on results with a list of some open questions:
\smallskip
\begin{SL}
\item[(1)] Do (a)--(c) of Theorem~\ref{T4.3} imply that
\begin{equation} \lb{4.28}
\sum_{n=1}^\infty \, (a_n -\ti a_n)^2 + (b_n - \ti b_n)^2 <\infty
\end{equation}
as is true in the case $\fre = [-2,2]$?

\item[(2)] Is there an extension of Theorem~\ref{T4.7} to the general $\fre$ case?

\item[(3)] Is there a converse to Theorem~\ref{T4.6}? This would be interesting even in the periodic case.
\end{SL}

\section{Methods} \lb{s5}

The theory of regular Jacobi matrices says one expects the leading growth of $P_n(z)$ as $n\to\infty$ to be
$\exp(n\Phi_{\rho_\fre}(z))$. $\Phi_{\rho_\fre}$ is harmonic on $\bbC\cup\{\infty\}\setminus\fre$ so we can locally define
a harmonic conjugate and so $\wti\Phi_{\rho_\fre} (z)$ analytic with $\Real\wti\Phi_{\rho_\fre} = \Phi_{\rho_\fre}$. If you circle
around $x$, $\log (z-x)$ changes by $2\pi i$, so circling around the band $[\alpha_j,\beta_j]$, we expect $\int
\log (z-x)\, d\rho_\fre(x)$ to change by $2\pi i \rho_\fre ([\alpha_j,\beta_j])$ and $\exp (-\wti\Phi_{\rho_\fre}(z))$
to have a change of phase by $\exp (-2\pi i \rho_\fre ([\alpha_j,\beta_j]))$. Thus, we are led to consider
analytic functions on $\bbC_+$ which we can continue along any curve in $\bbC\cup\{\infty\}\setminus\fre$.

To get a single-valued function, we need to lift to the universal covering space of $\bbC\cup\{\infty\}
\setminus\fre$ and $\exp (-\wti\Phi_{\rho_\fre}(z))$ will transform under the homotopy group via a character of
this group.

So long as $\ell\neq 0$, this cover, as a Riemann surface, is the disk, $\bbD$, and the deck transformations act
as a family of fractional linear transformations on the disk, that is, a Fuchsian group. The use of these
Fuchsian groups is thus critical to the theory and used to prove several of the theorems of Section~\ref{s4}
(Theorems~\ref{T4.7}, \ref{T4.7A}, and \ref{T4.8} are exceptions).

For more on Fuchsian groups, see Beardon \cite{Beardon}, Ford \cite{Ford}, Katok \cite{Katok}, Simon \cite{Rice},
and Tsuji \cite{Tsu}. The pioneers in this approach were Sodin--Yuditskii \cite{SY}. See \cite{PY,CSZ1,CSZ2,CSZ3,Rice}
for applications of these techniques.

\bigskip


\bibliographystyle{amsalpha}

\end{document}